\def\C{\mathbb{C}}
\def\R{\mathbb{R}}
\def\PSL{PSL_2(\C)}
\def\tri{\mathcal{T}}
\def\para{\mathcal{P}}
\def\hyperboloid{H}
\def\projectivedisc{D}
\DeclareMathOperator{\Isom}{Isom}
\DeclareMathOperator{\vol}{Vol}
\theoremstyle{plain}
\newtheorem{theorem}{Theorem}[section]
\newtheorem{lemma}[theorem]{Lemma}
\newtheorem{proposition}[theorem]{Proposition}
\newtheorem{corollary}[theorem]{Corollary}
\newtheorem{definition}[theorem]{Definition}
\newtheorem{remark}[theorem]{Remark}
\begin{document}

\title{{Thurston's Spinning Construction and Solutions to the\\ Hyperbolic Gluing Equations for Closed Hyperbolic 3-Manifolds}}
\author{Feng Luo, Stephan Tillmann and Tian Yang}

\begin{abstract}
We show that the hyperbolic structure on a closed, orientable, hyperbolic $3$-manifold can be constructed from a solution to the hyperbolic gluing equations using any triangulation with essential edges. The key ingredients in the proof are Thurston's spinning construction and a volume rigidity result attributed by Dunfield to Thurston, Gromov and Goldman. As an application, we show that this gives a new algorithm to detect hyperbolic structures on closed 3--manifolds.
\end{abstract}
\primaryclass{57M25, 57N10}
\keywords{hyperbolic 3--manifold, triangulation, parameter space, Thurston's gluing equations}

\maketitle


\section{Introduction}

In his Princeton notes \cite{Th1}, Thurston introduced a system of algebraic equations
---called the hyperbolic gluing equations--- for constructing hyperbolic
metrics on orientable $3$--manifolds with torus cusps. He used solutions to
the hyperbolic gluing equations to produce a complete hyperbolic metric on the
figure-eight knot complement in the early stages of formulating his
geometrization conjecture. On a \emph{closed,} triangulated, oriented 3--manifold
$M,$ the hyperbolic gluing equations can be defined
in the same way: We assign to each edge of
each oriented tetrahedron in the triangulation a shape parameter
$z\in \mathbb{C}\setminus \{0,1\}$, such that
\begin{enumerate}[(a)]
\item opposite edges of each tetrahedron have the same shape parameter;
\item the three shape parameters assigned to the three pairs of opposite
edges in each tetrahedron are $z,$ $\frac{1}{1-z}$ and
$\frac{z-1}{z}$ subject to an orientation convention; and
\item for each edge $e$ in $M,$ if $z_1,...,z_k$ are the
shape parameters assigned to the tetrahedron edges identified with $e,$ then
we have
\begin{equation}\label{monodromy}
\prod_{i=1}^kz_i=1.
\end{equation}
\end{enumerate}
The equations (\ref{monodromy}) are termed the \emph{hyperbolic gluing equations}, and the set of all solutions is the \emph{parameter space} $\para(M).$ The space $\para(M)$ depends on the triangulation of $M.$ We will study the solutions using a volume function and representations of the fundamental group. In order to to produce the representations, we need the following topological hypothesis.

The edge $e$ in $M$ is termed \emph{essential} if it is not a null-homotopic loop in $M.$ This is clearly the case if it has distinct end-points, but we allow the triangulation of $M$ to be semi-simplicial (or singular), so that some or all edges may be loops in $M.$ We will always assume that all edges are essential. In this case, to each solution $Z \in \para(M),$ there is an associated representation $\rho_{Z}\co \pi_1(M)\to \PSL.$

Given $Z \in \para(M),$ the \emph{volume of $Z,$} denoted $\vol(Z),$ is defined using the Lobachevsky-Milnor formula \cite{Mi} as the sum of the signed volumes of the oriented ideal hyperbolic tetrahedra specified by $Z.$

Suppose that $M$ is hyperbolic and denote $\vol(M)$ the hyperbolic volume of $M.$ Using results about representation volume, we show that for each $Z \in \para(M),$ $\vol(Z) \in [-\vol(M), \vol(M)],$ and it is a consequence of Stokes' theorem that $\vol\co \para(M) \to \R$ is constant on the topological components of $\para(M).$

Our main observation is that, just as in the case of hyperbolic manifolds with cusps, we can recover the hyperbolic structure on a closed hyperbolic $3$-manifold from any solution of the hyperbolic gluing equations with maximal volume:

\begin{theorem}\label{thm:main}
Let $M$ be a closed, oriented, triangulated, hyperbolic $3$--manifold with the property that all edges in $M$ are essential. Then there exists $Z_\infty \in \para(M)$ such that $\vol(Z_{\infty})=\vol(M).$ Moreover, for any such $Z_\infty,$ the associated holonomy representation $\rho_{\infty}\co\pi_1(M)\rightarrow\PSL$ is discrete and faithful, and $M$ is isometric with $\mathbb{H}^3/\rho_{\infty}(\pi_1(M)).$
\end{theorem}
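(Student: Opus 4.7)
The strategy is to produce a maximal-volume solution $Z_\infty$ via Thurston's spinning construction applied to the discrete faithful holonomy $\rho_0\co\pi_1(M)\to\PSL$ of the hyperbolic structure on $M$, and then to deduce the geometric conclusions from the Dunfield--Thurston--Gromov--Goldman volume rigidity theorem. Because $M$ is closed and hyperbolic, every nontrivial $g\in\pi_1(M)$ acts on $\mathbb{H}^3$ as a loxodromic isometry with two fixed points on $\partial\mathbb{H}^3$. For each vertex $v$ of $M$ I would choose a nontrivial loop $\gamma_v$ based at $v$ representing some $g_v\in\pi_1(M,v)$ and pick one of the two fixed points $p_v\in\partial\mathbb{H}^3$ of $\rho_0(g_v)$; lifting to $\tilde M$ with preferred vertex lifts extends this data $\rho_0$-equivariantly to an assignment of an ideal endpoint to every vertex of $\tilde M$. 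Straightening each lifted tetrahedron to the ideal hyperbolic tetrahedron on its four prescribed ideal vertices and reading off the cross-ratios defines the candidate $Z_\infty\in\para(M)$. Conditions (a) and (b) follow from the cross-ratio formula; condition (c) holds because the lifted ideal tetrahedra around any edge of $\tilde M$ fit together geometrically in $\mathbb{H}^3$. Non-degeneracy of each tetrahedron is ensured by choosing the $p_v$ outside a countable collection of proper subvarieties.

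To show $\vol(Z_\infty)=\vol(M)$, I would view the spinning as a continuous deformation from the straight-simplex developing image of a fundamental cycle of $M$ (with vertex images in the compact part of $\mathbb{H}^3$) to its ideal limit. Since the hyperbolic volume form on $\mathbb{H}^3$ is closed, Stokes' theorem implies that the signed sum of tetrahedral volumes is invariant through the deformation; since the initial sum equals $\vol(M)$, so does the limit. For the second assertion, let $Z_\infty$ be any solution with $\vol(Z_\infty)=\vol(M)$ and let $\rho_\infty$ be its associated representation. Interpreting $\vol(Z_\infty)$ as the pairing of the $\rho_\infty$-pullback of the hyperbolic volume class with a fundamental cycle of $M$ identifies it with the representation volume of $\rho_\infty$. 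The volume rigidity theorem then asserts that any $\rho\co\pi_1(M)\to\PSL$ whose representation volume attains $\pm\vol(M)$ is discrete, faithful, and conjugate in $\Isom(\mathbb{H}^3)$ to $\rho_0$, which immediately yields the desired isometry $M\cong\mathbb{H}^3/\rho_\infty(\pi_1(M))$.

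The main obstacle I expect is the volume computation in the existence step. Keeping every spun tetrahedron non-degenerate is a genericity issue, handled by avoiding a countable set of bad configurations for the base fixed points. The more delicate point is to rule out that the signed volumes cancel in the ideal limit and produce a total strictly smaller than $\vol(M)$; this uses that $\vol(M)$ is the maximum signed volume of any straight simplicial representative of the fundamental class (via the bounded-cohomology machinery underlying volume rigidity), so the straight-line homotopy implementing the spinning cannot lose volume in the limit.
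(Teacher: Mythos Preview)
Your overall strategy matches the paper's: construct $Z_\infty$ by spinning the vertices of the developing map out to $\partial\mathbb{H}^3$, verify the gluing equations from the cross-ratio identities, identify $\vol(Z)$ with the representation volume, and then invoke volume rigidity (the paper's Corollary~\ref{cor:shape vol}) together with Mostow for the ``moreover'' clause. The paper chooses, for each vertex $v_i$, an arbitrary geodesic through $v_i$ and spins along it; your choice of a fixed point of a loxodromic element is a specialisation of this and works equally well once genericity is imposed.

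The one place where your argument drifts from the paper and becomes shaky is the passage to the limit in the volume computation. Your Stokes/homotopy-invariance argument correctly shows that for every \emph{finite} $t$ the signed sum of tetrahedral volumes equals $\vol(M)$; but as $t\to\infty$ the vertices go to $S^2_\infty$, and one must still justify that the individual simplex volumes converge to the volumes of the limiting ideal simplices. That is not a bounded-cohomology statement about the maximality of $\vol(M)$ --- it is a continuity-of-volume fact. The paper isolates it as Lemma~\ref{tech}, which in turn rests on the continuity of hyperbolic simplex volume as the vertices become ideal (Luo's solution of Milnor's conjecture, \cite{Lu1}). Once you have that continuity, the worry in your final paragraph evaporates: the total is exactly $\vol(M)$ for every finite $t$ and the tetrahedron-by-tetrahedron limit exists, so the ideal total is $\vol(M)$ on the nose, with no possibility of cancellation or loss. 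Replacing your bounded-cohomology appeal by this continuity lemma would make the proposal coincide with the paper's proof.
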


The existence statement in the proof of Theorem~\ref{thm:main} is constructive and makes crucial use of Thurston's spinning construction from \cite{Th2}. The remainder is proved using a volume rigidity result attributed by Dunfield to Thurston, Gromov and Goldman (see \cite{Du}, Theorem 6.1).

The only known algorithmic detection and description of hyperbolic structures on closed 3--manifolds is due to Casson and Manning \cite{Ma} and uses the automatic structure of the fundamental group. As an application, we show that the above results allow the algorithmic construction of the hyperbolic structure without having to find a solution to the word problem and, moreover, we show that the recognition of small Seifert fibred spaces is also within the scope of our approach:

\begin{theorem}[Casson-Manning\cite{Ma} \& Rubinstein \cite{Ru}]\label{thm:app}
There exists an algorithm, which will, given a triangulated, closed, orientable 3--manifold $M$ decide whether or not it has a hyperbolic structure or a small Seifert fibred structure. Moreover, if $M$ has a hyperbolic structure or a small Seifert fibred structure, then the structure is algorithmically constructible.
\end{theorem}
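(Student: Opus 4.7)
The plan is to combine Theorem~\ref{thm:main} with Rubinstein's recognition algorithm for (small) Seifert fibered spaces \cite{Ru} and standard effective $3$-manifold topology, running the hyperbolic search and the Seifert fibered detector in parallel.

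\emph{Hyperbolic branch.} Given the input triangulation $T$ of $M$, I would first subdivide (for instance barycentrically, with further subdivisions along offending edges) until every edge is essential. This is algorithmic because essentiality of an edge reduces to deciding whether a loop is null-homotopic in $M$, and the word problem for closed orientable $3$-manifold groups is solvable by classical normal surface techniques. With all edges essential, the parameter space $\para(M)$ is an explicit semi-algebraic subset of $(\mathbb{C}\setminus\{0,1\})^n$ cut out by the polynomial equations (\ref{monodromy}) together with the shape-parameter conventions (a) and (b); its finitely many connected components and the corresponding (constant, algebraic) values of $\vol$ can be enumerated by cylindrical algebraic decomposition. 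Select any solution $Z_\infty$ realizing the maximum volume $V_{\max}$ and construct the associated representation $\rho_\infty\co \pi_1(M)\to \PSL$. By Theorem~\ref{thm:main}, if $M$ is hyperbolic then $V_{\max}=\vol(M)$ and $\rho_\infty$ is discrete and faithful with quotient isometric to $M$; this is then certified by computing a Dirichlet domain for $\rho_\infty(\pi_1(M))$ in $\mathbb{H}^3$ and verifying that the quotient triangulation is combinatorially equivalent to $T$, an algorithmic comparison of $3$-manifold triangulations.

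\emph{Parallel Seifert branch and conclusion.} In parallel, apply Rubinstein's algorithm to decide whether $M$ admits a small Seifert fibered structure, and to construct it when one exists. Combined with standard normal-surface algorithms for detecting essential spheres and tori, the parallel procedure terminates on every input: if $M$ is hyperbolic the hyperbolic branch wins, by Theorem~\ref{thm:main}; if $M$ is small Seifert fibered, Rubinstein's branch terminates; and otherwise essential surface detection rules both structures out.

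The main obstacle is the certification step in the hyperbolic branch, namely converting the algebraic output $Z_\infty$ into a rigorous finite certificate of the isometry $M\cong\mathbb{H}^3/\rho_\infty(\pi_1(M))$. Theorem~\ref{thm:main} supplies the existence and the geometric identification in principle, but carrying it out effectively requires computation with algebraic numbers and the construction of a fundamental domain. Here volume maximality is essential: it forces $\rho_\infty$ into the discrete faithful orbit by volume rigidity, thereby bypassing any direct (and in general intractable) discreteness test, since among all representations $\pi_1(M)\to\PSL$ only the holonomy of the hyperbolic structure realizes the maximum volume $\vol(M)$.
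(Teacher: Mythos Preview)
Your overall strategy is workable but differs substantially from the paper's, and it contains one genuine error and one unnecessary detour. The paper does \emph{not} run a parallel race. Instead, it first checks irreducibility and atoroidality via normal surfaces, then invokes Geometrization to reduce to the dichotomy hyperbolic versus small Seifert fibred, and then uses a single computation on $\para(M)$ to decide between them: by Francaviglia's result (Lemma~\ref{lem:vol of sSFS}), every representation of a graph manifold has volume zero, so $\vol(Z)\neq 0$ for some $Z\in\para(M)$ forces $M$ hyperbolic, while $\vol\equiv 0$ (together with Theorem~\ref{thm:main}) forces $M$ to be a small Seifert fibred space. This dichotomy is the point you are missing; it replaces your Dirichlet-domain certification and your appeal to Rubinstein's algorithm by a single volume test. (For the Seifert case the paper then cites Sela rather than Rubinstein.)

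Two concrete problems with your write-up. First, the values of $\vol$ on the components of $\para(M)$ are \emph{not} algebraic: they are sums of values of the Lobachevsky function at algebraic arguments, hence transcendental in general, so cylindrical algebraic decomposition cannot enumerate them. The paper handles this carefully by computing volumes to finite precision, using the Gabai--Meyerhoff--Milley lower bound on hyperbolic volume to certify when the maximum is zero, and comparing characters (which \emph{are} algebraic) when two numerical volumes appear to coincide. Second, invoking a solution to the word problem to test edge essentiality is both unnecessary and contrary to the spirit of the result: a single barycentric subdivision makes every edge have distinct endpoints and hence be essential, with no group theory required. Your claim that the word problem follows from ``classical normal surface techniques'' is in any case not accurate; solvability of the word problem for closed $3$-manifold groups is a deep fact.
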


This paper is organised as follows. In Section~\ref{sec:straightening}, we summarise some background material on hyperbolic geometry and prove
a technical lemma which will be used in the spinning construction.
Thurston's spinning construction is reviewed in Section~\ref{sec:spinning}, and basic properties of the parameter space are given in Section~\ref{sec:parameter space}. The proofs of Theorems~\ref{thm:main} and~\ref{thm:app} are given in Section~\ref{sec:proofs}.


\section{The straightening map for hyperbolic simplices}
\label{sec:straightening}

In this section, we will prove a technical result (Lemma \ref{tech})
that will be used in the subsequent sections. The proof of Lemma
\ref{tech} makes use of both the hyperboloid and Klein models of
$n$-dimensional hyperbolic space which are now briefly reviewed.


\subsection{The hyperboloid and Klein models of $\mathbb{H}^n$}

Let $\mathbb{E}^{n,1}$ be the \textit{Minkowski space} which is
$\mathbb{R}^{n+1}$ with inner product $<,>$ defined by
\begin{equation*}
\langle x,y\rangle=\sum_{i=1}^nx_iy_i-x_{n+1}y_{n+1},
\end{equation*}
where $x=(x_1,...,x_{n+1})$ and $y=(y_1,...,y_{n+1}).$ The
\textit{hyperboloid model} $\hyperboloid^n$ of $\mathbb{H}^n$ is
\begin{equation*}
\hyperboloid^n=\{x\in \mathbb{E}^{n,1}\ |\ \langle x,x \rangle=-1,\ x_{n+1}>0\},
\end{equation*}
and the \textit{light cone} $\mathbb{L}$ is defined to be
\begin{equation*}
\mathbb{L}=\{x\in \mathbb{E}^{n,1}\ |\ \langle x,x \rangle=0,\ x_{n+1}>0\}.
\end{equation*}
Geodesics in the hyperboloid model $\hyperboloid^n$ are of the form
$\hyperboloid^n\cap L,$ where $L$ is a $2$-dimensional linear
subspace in $\mathbb{E}^{n,1}.$

The open unit disc $\projectivedisc^n\subset\mathbb{R}^n$ is identified with a subset of $\mathbb{E}^{n,1}$ via the natural maps
$$\projectivedisc^n\subset\mathbb{R}^n \hookrightarrow \mathbb{R}^n\times\{1\}\subset\mathbb{E}^{n,1}.$$
The radial projection $\pi\co\hyperboloid^n\rightarrow\projectivedisc^n,$ defined by 
$\pi(x_1,...,x_{n+1})=\frac{1}{x_{n+1}}(x_1,...,x_{n+1}),$
is
a bijection, and the \textit{Klein model} of the $n$-dimensional
hyperbolic space is $\projectivedisc^n$ with the Riemannian metric
induced from $\hyperboloid^n$ via $\pi.$ Since $\pi$ maps geodesics in
$\hyperboloid^n$ to line segments, geodesics in the Klein model
$\projectivedisc^n$ are Euclidean straight lines. The boundary
$\partial\projectivedisc^n$ of the Klein model $\projectivedisc^n$ is called
the \textit{sphere at infinity}, denoted by $S_{\infty}^{n-1}$, and
$\overline{\projectivedisc^n}=\projectivedisc^n\cup S_{\infty}^{n-1}$ is
called the \textit{compactification} of $\projectivedisc^n$. We see that
$\overline{\projectivedisc^n}$ naturally inherits a smooth structure from
$\mathbb{R}^n\times\{1\}\subset\mathbb{E}^{n,1}$.

We will denote
\begin{equation*}
\mathbb{E}^{n,1}_{<0}=\{x\in \mathbb{E}^{n,1}\ |\ \langle x,x \rangle <0,\ x_{n+1}>0\}
\end{equation*}
and
\begin{equation*}
\mathbb{E}^{n,1}_{\le0}=\{x\in \mathbb{E}^{n,1}\ |\ \langle x,x \rangle \le 0,\ x_{n+1}>0\}.
\end{equation*}
We have $H^n \subset \mathbb{E}^{n,1}_{<0} \subset \mathbb{E}^{n,1}_{\le0}$ and the radial projection
$\pi\co\hyperboloid^n\rightarrow\projectivedisc^n$ can be viewed as
the restriction to $\hyperboloid^n$ of the radial projection
$\pi\co\mathbb{E}^{n,1}_{\le0}\rightarrow\overline{\projectivedisc^n}.$
\\


\subsection{The straightening map for hyperbolic simplices}

Let
$$\Delta^k=\{(t_0,...,t_k)\in\mathbb{R}^{k+1}\ |\ t_i\geqslant0,\
i=0,...,k\ \text{and}\ \sum t_i=1\}$$ be the standard $k$-simplex
with vertex set $\{e_i\}.$ Following \cite{Th2}, any $k+1$ points
$v_0,...,v_k,$ $1\leqslant k\leqslant n,$ in $\projectivedisc^n$
determine a \textit{straightening map} (or \textit{straight
$k$-simplex})
$\sigma_{v_0,...,v_k}\co\Delta^k\rightarrow\projectivedisc^n,$ whose
image is the convex hull of $v_0,...,v_k.$ It is defined as follows.
In the Minkowski space $\mathbb{E}^{n,1}$, the $k+1$ points
$u_i=\pi^{-1}(v_i) \in \hyperboloid^n,$ $i \in \{0,\ldots, k\},$
determine an affine $k$-simplex
$\sigma\co\Delta^k\rightarrow\mathbb{E}^{n,1}_{<0}$ by the
barycentric coordinates
\begin{equation*}
\sigma(t_0,...,t_k)=\sum_{i=0}^kt_iu_i.
\end{equation*}

\begin{definition}
The straightening map
$\sigma_{v_0,...,v_k}\co\Delta^k\rightarrow\projectivedisc^n$ is defined
by $\sigma_{v_0,...,v_k}=\pi\circ\sigma,$ and
$\sigma\co\Delta^k\rightarrow\mathbb{E}^{n,1}_{<0}$ is called the affine
simplex of $\sigma_{v_0,...,v_k}.$
\end{definition}

A \emph{straight ideal simplex} is defined similarly. In this case, the vertices $v_0,...,v_k$ are in $\partial\projectivedisc^n=S_{\infty}^{n-1}.$ Choose $u_0,...,u_k$ in the light cone $\mathbb{L}$ so that $\pi(u_i)=v_i$ for all $i$. Let the affine simplex $\sigma\co\Delta^k\to \mathbb{E}^{n,1}_{\le0}$ be defined by
\begin{equation*}
\sigma(t_0,...,t_k)=\sum_{i=0}^kt_iu_i.
\end{equation*}
Then a straight ideal simplex with vertices $v_0,...,v_k$ is defined to be 
$$\pi\circ\sigma\co\Delta^k\rightarrow\overline{\projectivedisc^n}.$$ 
Note that, unlike the compact case, the points $u_i$ are not unique since $\pi^{-1}(v_i)=\{t\cdot u_i\ |\ t>0\}.$ Thus, a straight ideal simplex is not uniquely determined by its set of vertices $\{v_0,...,v_k\}.$ Instead, given $\{v_0,...,v_k\}$ in $\partial\projectivedisc^n,$ the set of all straight ideal simplices with vertex set $\{v_0,...,v_k\}$ is bijective to $\mathbb{R}_{>0}^{k+1}.$ In Penner's language \cite{penner}, a straight ideal simplex is a decorated hyperbolic ideal simplex with a positive number assigned to each vertex. If a subsimplex of a straight ideal simplex is viewed as a simplex in its own right, it will always inherit the same decoration unless stated otherwise.

\begin{proposition}\label{straight}
The straight simplex (and straight ideal simplex) is natural in the
following sense.
\begin{enumerate}
\item If $\Delta'$ is an $m$-face of $\Delta^k$ so that
$\sigma_{v_0,...,v_k}(\Delta')$ has vertices $v_{i_0},...,v_{i_m},$
then
$$\sigma_{v_0,...,v_k}|_{\Delta'}=\sigma_{v_{i_0},...,v_{i_m}}.$$

\item If $g\in \Isom(\mathbb{H}^n),$ the group of isometries of
$\mathbb{H}^n,$ then
$$g\circ\sigma_{v_0,...,v_k}=\sigma_{g\cdot v_0,...,g\cdot v_k}.$$
\end{enumerate}
\end{proposition}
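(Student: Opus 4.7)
The plan is to reduce both assertions to straightforward properties of the affine simplex $\sigma\co\Delta^k\to\mathbb{E}^{n,1}_{\le 0}$, exploiting two key facts: barycentric combinations in $\mathbb{E}^{n,1}$ are compatible with face inclusions and with linear automorphisms, and the radial projection $\pi\co\mathbb{E}^{n,1}_{\le 0}\to\overline{\projectivedisc^n}$ intertwines the linear action of $\Isom(\mathbb{H}^n)$ on $\mathbb{E}^{n,1}$ with its isometric action on $\overline{\projectivedisc^n}$. With these two ingredients, the proposition becomes essentially a diagram chase. I treat the compact and ideal cases in parallel since the only difference is whether the chosen lifts lie on $\hyperboloid^n$ or on $\mathbb{L}$.

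For part (1), let $\Delta'\subset\Delta^k$ be the $m$-face with vertices $e_{i_0},\ldots,e_{i_m}$. A point $(t_0,\ldots,t_k)\in\Delta'$ has $t_i=0$ for $i\notin\{i_0,\ldots,i_m\}$, so the defining formula
\begin{equation*}
\sigma(t_0,\ldots,t_k)=\sum_{i=0}^k t_i u_i
\end{equation*}
restricts to $\sum_{j=0}^m t_{i_j} u_{i_j}$, which is by definition the affine simplex associated with the vertices $u_{i_0},\ldots,u_{i_m}$. Composing with $\pi$ gives $\sigma_{v_0,\ldots,v_k}|_{\Delta'}=\sigma_{v_{i_0},\ldots,v_{i_m}}$. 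In the ideal case, the lifts $u_{i_j}$ inherited from the ambient simplex are precisely the decoration the subsimplex is meant to carry, consistent with the convention stated just before the proposition.

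For part (2), I would realize $g\in\Isom(\mathbb{H}^n)$ as its linear extension $\tilde g\in O^+(n,1)$ on $\mathbb{E}^{n,1}$. Since $\tilde g$ preserves the Minkowski form and the positive sheet, it stabilises $\hyperboloid^n$ and $\mathbb{L}$ and sends rays from the origin to rays from the origin; hence $\pi\circ\tilde g=g\circ\pi$ on $\mathbb{E}^{n,1}_{\le 0}$. Linearity of $\tilde g$ yields
\begin{equation*}
\tilde g\Bigl(\sum_{i=0}^k t_i u_i\Bigr)=\sum_{i=0}^k t_i\,\tilde g(u_i),
\end{equation*}
so $\tilde g\circ\sigma$ is the affine simplex with vertices $\tilde g(u_0),\ldots,\tilde g(u_k)$. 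Applying $\pi$ and the intertwining relation gives
\begin{equation*}
g\circ\sigma_{v_0,\ldots,v_k}=g\circ\pi\circ\sigma=\pi\circ\tilde g\circ\sigma=\sigma_{g\cdot v_0,\ldots,g\cdot v_k}.
\end{equation*}

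I do not expect a genuine obstacle: the content of the proposition is that two natural operations (barycentric combination in $\mathbb{E}^{n,1}$ and radial projection) compose to a natural operation. The only mildly delicate point is in the ideal case of (2), where the preimages $u_i\in\mathbb{L}$ of $v_i$ are defined only up to positive scaling; but once the $u_i$ are fixed, $\tilde g(u_i)$ is automatically a legitimate preimage of $g\cdot v_i$ in $\mathbb{L}$, so the identity of maps holds with the decoration on the right-hand simplex transported by $\tilde g$.
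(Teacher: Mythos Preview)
Your argument is correct and follows essentially the same route as the paper: reduce both parts to the corresponding statements for the affine simplex in $\mathbb{E}^{n,1}$, and then use that the radial projection $\pi$ commutes with the linear action of $\Isom(\mathbb{H}^n)$. You have simply spelled out more of the details (the explicit barycentric restriction in part~(1), the $O^+(n,1)$ realization and the ideal-case decoration remark in part~(2)) than the paper's terse proof does.
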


\begin{proof}
The first part is true since both sides of the equation equal
$\pi\circ\sigma|_{\Delta'}.$ To see the second part, if $\sigma$ is
the affine simplex of $\sigma_{v_0,...,v_k},$ then $g\cdot\sigma$ is
the affine simplex of $\sigma_{g\cdot v_0,...,g\cdot v_k}.$
Therefore, $\pi\circ g\cdot\sigma=\sigma_{g\cdot v_0,...,g\cdot
v_k}.$ Since the radial projection commutes with each $g\in
\Isom(\mathbb{H}^n),$
$g\cdot\sigma_{v_0,...,v_k}=g\cdot(\pi\circ\sigma)=\pi\circ
g\cdot\sigma=\sigma_{g\cdot v_0,...,g\cdot v_k}.$
\end{proof}

\begin{lemma}\label{tech}
Let $\{\sigma_t\co\Delta^k\rightarrow\projectivedisc^n \mid t \in
\mathbb{R}_{\geqslant0} \}$ be a family of straight $k$-simplices so
that the $i$-th vertex $v_{i,t}$ of $\sigma_t$ lies on the geodesic
ray $l_i$ and $v_{i,t}$ moves toward the end point $v_i^*$ of $l_i$
at unit speed, i.e., $d(v_{i,0},v_{i,t})=t$. If $v_0^*,...,v_k^*$
are pairwise distinct, then as $t$ tends to $\infty$ the family
$\{\sigma_t\}$ converges pointwise to an ideal straight $k$-simplex
$\sigma_{\infty}\co\Delta^k\rightarrow\projectivedisc^n$ whose
vertices are $v_0^*,...,v_k^*$. Furthermore,
$\lim_{t\rightarrow\infty}\vol(\sigma_t)=\vol(\sigma_{\infty}).$
\end{lemma}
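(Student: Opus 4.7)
My plan is first to establish pointwise convergence by lifting to the hyperboloid model, and then to deduce volume convergence by a dominated convergence argument in the Klein model. For each $i$, let $u_{i,t} = \pi^{-1}(v_{i,t}) \in \hyperboloid^n$. Because $l_i$ is a unit speed geodesic ray, this lift has the standard form $u_{i,t} = \cosh(t)\, u_{i,0} + \sinh(t)\, n_i$, where $n_i \in \mathbb{E}^{n,1}$ is the spacelike unit vector orthogonal to $u_{i,0}$ tangent to $l_i$ at $v_{i,0}$. Set $w_i^* := u_{i,0} + n_i$; this is a future-pointing null vector with $\pi(w_i^*) = v_i^*$, and $e^{-t} u_{i,t} \to \tfrac{1}{2} w_i^*$ as $t \to \infty$. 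Rescaling the affine simplex accordingly,
\[ e^{-t}\, \sigma_t^{\mathrm{aff}}(t_0,\ldots,t_k) = \sum_{i=0}^{k} t_i\, e^{-t} u_{i,t} \longrightarrow \tfrac{1}{2}\sum_{i=0}^{k} t_i\, w_i^*. \]
Since $v_0^*,\ldots,v_k^*$ are pairwise distinct, the forward null vectors $w_i^*$ are pairwise non-proportional, so $\langle w_i^*, w_j^*\rangle < 0$ whenever $i \ne j$. Hence every nontrivial convex combination of the $w_i^*$ lies in $\mathbb{E}^{n,1}_{\le0} \setminus \{0\}$. The invariance of $\pi$ under positive scaling then yields
\[ \sigma_t(t_0,\ldots,t_k) = \pi\bigl(e^{-t} \sigma_t^{\mathrm{aff}}(t_0,\ldots,t_k)\bigr) \longrightarrow \pi\Bigl(\sum_{i} t_i w_i^*\Bigr) = \sigma_\infty(t_0,\ldots,t_k), \]
where $\sigma_\infty$ is the straight ideal simplex whose decorated vertex data is given by the $w_i^*$.

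\textbf{Volume convergence.} In the Klein model, straight simplices are Euclidean convex hulls and the ray $l_i$ is the Euclidean segment from $v_{i,0}$ to $v_i^*$, so $v_{i,t}$ lies on that segment. Consequently every image $S_t := \sigma_t(\Delta^k)$ is contained in the fixed compact polytope
\[ \tilde S := \operatorname{conv}\bigl(\{v_{i,0} : i = 0,\ldots, k\} \cup \{v_i^* : i = 0,\ldots, k\}\bigr) \subset \overline{\projectivedisc^n}. \]
Triangulating $\tilde S$ by finitely many (possibly partially ideal) straight simplices shows that $\vol(\tilde S) < \infty$. Letting $\omega$ denote the hyperbolic volume form on $\projectivedisc^n$, the function $\chi_{\tilde S} \cdot \omega$ is integrable and dominates $\chi_{S_t} \cdot \omega$ for every $t$. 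The pointwise vertex convergence above forces the Euclidean convex hulls $S_t$ to converge to $S_\infty := \sigma_\infty(\Delta^k)$ in Hausdorff distance, so $\chi_{S_t}(x) \to \chi_{S_\infty}(x)$ for every $x \in \Int(S_\infty)$ and every $x \notin \overline{S_\infty}$. The exceptional set $\partial S_\infty$ is a finite union of Euclidean simplices of codimension at least one, hence has measure zero, and dominated convergence yields $\vol(\sigma_t) \to \vol(\sigma_\infty)$.

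\textbf{Main obstacle.} The delicate step is the volume convergence, because $\omega$ is singular near the ideal vertices where the mass of $\sigma_t$ concentrates in the limit. The Klein-model containment $S_t \subset \tilde S$ inside a fixed polytope of finite hyperbolic volume is the key geometric input that makes dominated convergence go through; without it one would need direct horoball cutoff estimates to control the volume tails near each ideal vertex.
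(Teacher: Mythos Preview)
Your pointwise convergence argument is essentially the paper's: both rescale the affine lift (you by $e^{-t}$, the paper by $1/\cosh t$) and use the scale-invariance of the radial projection $\pi$. The only cosmetic difference is that you write out the geodesic formula $u_{i,t}=\cosh(t)u_{i,0}+\sinh(t)n_i$ explicitly, whereas the paper packages the same computation as the observation that $\gamma_{n+1}(t)/\cosh(t)$ has a positive limit.

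Your volume argument, however, is genuinely different and more elementary. The paper observes that vertex convergence implies dihedral angle convergence and then invokes Luo's resolution of Milnor's conjecture on the continuity of simplex volume in the dihedral angles (reference [Lu1]), treating it as a black box. You instead trap all the $S_t$ inside the single finite-volume convex polytope $\tilde S$ in the Klein model and run dominated convergence on the characteristic functions. Your route avoids the external theorem entirely and is self-contained; the paper's route is shorter to state once one is willing to cite [Lu1]. One small caveat: your version, as written, implicitly takes $k=n$ (you integrate the top-dimensional form $\omega$ against $\chi_{S_t}$). For $k<n$ you would need to restrict to the totally geodesic $k$-plane spanned by the $v_i^*$ and run the same argument there; this is harmless, and in any case the paper only ever applies the lemma with $k=n=3$.
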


\begin{proof}
The proof is based on the following observation. Suppose
$\gamma(t)=(\gamma_1(t),...,\gamma_{n+1}(t)),$ for $t\in\mathbb{R}$ is a
geodesic in the hyperboloid model $H^n,$ which is parameterised by unit speed. Then
\begin{equation}\label{lim}
\lim_{t\rightarrow\infty}\frac{\gamma_{n+1}(t)}{\cosh(t)}
\end{equation}
exists and is in $\mathbb{R}_{>0}$.

Indeed, the result is obvious for the specific geodesic
$\delta(t)=(\sinh(t),0,...,0,\cosh(t))$. Now any other geodesic
$\gamma(t)$ is the image of $\delta(t)$ under a linear
transformation $A$ (independent of $t$) preserving $H^n$.

To prove the proposition, for $\sigma_t$, let $\widetilde{\sigma_t}$
be the affine simplex in $\mathbb{E}^{n,1}$ with the same set of
vertices as that of $\sigma_t$. Then by definition
$\sigma_t=\pi\circ\widetilde{\sigma_t}$. Let
$\tau_t=\frac{1}{\cosh(t)}\cdot\widetilde{\sigma_t}$ be a new
affine simplex. By the choice of vertices of $\widetilde{\sigma_t}$ and
(\ref{lim}), the family of maps $\tau_t$ converges pointwise to an
affine map $\tau_{\infty}\co\Delta^k\rightarrow\mathbb{E}^{n,1}$. It
follows that $\sigma_t=\pi(\widetilde{\sigma_t})=\pi(\tau_t)$
converges pointwise to an ideal straight $k$-simplex.

To see the convergence of the volume of $\sigma_t$ to
$\sigma_{\infty}$, we observe that since the vertices converge, the
corresponding dihedral angles of $\sigma_t$ converge to those of
$\sigma_{\infty}$. Now by the solution of Milnor's conjecture on the
volume of hyperbolic simplices \cite{Lu1}, we conclude that the
volumes converge.
\end{proof}


\section{Thurston's spinning construction}
\label{sec:spinning}

Let $M$ be a closed hyperbolic $3$-manifold, and $\tri$ be a (possibly semi-simplicial) triangulation of $M$ with the property that all edges are essential. We denote $M^{(k)}$ the $k$--skeleton in $M.$
Let $p\co\widetilde{M}\rightarrow M$ be the
universal cover of $M$ and $\widetilde{\tri}$ be the triangulation of $\widetilde{M}$ induced by $\tri.$ Since all edges in $M$ are essential, it follows that if $\sigma$ is a 3--simplex in $M,$ then any lift
$\widetilde{\sigma}$ of $\sigma$ to $\widetilde{M}$ has four distinct vertices.
We may identify $\widetilde{M}$ with the Klein model $\projectivedisc^3$ using the hyperbolic metric, and the natural action of $\pi_1(M)$ by deck transformations on $\projectivedisc^3$ is by isometries. Thurston's spinning construction in our context is summarized as follows.

\begin{proposition}\label{pro:spin}
Let M be a closed, triangulated, hyperbolic 3-manifold with the
property that all edges in M are essential. Then there exists a
continuous family of piecewise smooth, $\pi_1(M)$--equivariant maps
$$\widetilde{F}_t\co\widetilde{M}\rightarrow\projectivedisc^3,$$
$t\in[0,\infty),$ and a piecewise smooth, $\pi_1(M)$--equivariant
map
$\widetilde{F}_{\infty}\co\widetilde{M}\rightarrow\overline{\projectivedisc^3},$
such that
\begin{enumerate}
\item for each vertex $\widetilde{v}$ of $\widetilde{\tri},$
the vertex $\widetilde{F}_t(\widetilde{v})$ approaches
$S_{\infty}^2,$ and for each 3-simplex $\widetilde{\sigma}$ of
$\widetilde{\tri},$ $\widetilde{F}_t(\widetilde{\sigma})$
is a hyperbolic tetrahedron,
\item $\widetilde{F}_t$ descents to a piecewise smooth map
$F_t\co M\rightarrow M$ which is homotopic to the identity map of
$M,$ for all $t\in[0,\infty),$
\item
$\widetilde{F}_\infty=\lim_{t\rightarrow\infty}\widetilde{F}_t$
pointwise in $\overline{\projectivedisc^3},$
\item for every 3--simplex $\widetilde{\sigma}$ in
$\widetilde{\tri},$ $\widetilde{F}_\infty(\widetilde{\sigma})$ is an
ideal tetrahedron with $4$ distinct vertices in
$\overline{\projectivedisc^3},$
\item $\widetilde{F}_\infty(\widetilde{M}\setminus p^{-1}(M^{(0)}))\subset\projectivedisc^3,$
and $\widetilde{F}_\infty|_{\widetilde{M}\setminus p^{-1}(M^{(0)})}$
descends to a piecewise smooth map $F_\infty\co M\setminus
M^{(0)}\rightarrow M.$
\end{enumerate}
\end{proposition}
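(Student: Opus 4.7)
The plan is to build $\widetilde{F}_t$ by moving each vertex of $\widetilde{\tri}$ along a $\pi_1(M)$-equivariant family of geodesic rays at unit speed and then straightening the result on every simplex, and finally to invoke Lemma~\ref{tech} to take $t\to\infty$. The naturality of the straightening map (Proposition~\ref{straight}(2)) will automatically give $\pi_1(M)$-equivariance.

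First I would identify $\widetilde{M}$ with $\projectivedisc^3$ via the developing map, so that the deck action is isometric. For each vertex $v$ of $\tri$, I pick a single lift $\widetilde{v}$ and a point $\widetilde{v}^{\,*} \in S_{\infty}^{2}$, and let $l_{\widetilde{v}}\co [0,\infty)\to\projectivedisc^3$ be the unit-speed geodesic ray from $\widetilde{v}$ to $\widetilde{v}^{\,*}$; for any other lift $g\cdot\widetilde{v}$ I declare the ray to be $g\cdot l_{\widetilde{v}}$. The endpoints $\widetilde{v}^{\,*}$ must be chosen so that, for every $3$-simplex of $\widetilde{\tri}$, the four ideal endpoints at its vertices are pairwise distinct, as required by Lemma~\ref{tech}. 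Only finitely many $\pi_1$-orbits of $3$-simplices need to be checked, and each pairwise coincidence constrains some $\widetilde{v}^{\,*}$ to a countable subset of $S_{\infty}^{2}$ (either a point-orbit, or the fixed-point set of a single nontrivial deck transformation), so a generic choice works.

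Having fixed the rays, for each $t\geq 0$ I define $\widetilde{F}_t$ on vertices by $\widetilde{F}_t(\widetilde{w})=l_{\widetilde{w}}(t)$ and extend it over each simplex of $\widetilde{\tri}$ by the straightening map on the current vertex positions. By Proposition~\ref{straight}(2), $\widetilde{F}_t$ is $\pi_1(M)$-equivariant, hence descends to $F_t\co M\to M$; piecewise smoothness is clear from the construction. For the homotopy part of~(2), $F_0$ is the straightening of the original triangulation and is homotopic to $\identity_M$ by the standard straight-line homotopy inside each simplex in the Klein model, while $F_t$ is homotopic to $F_0$ through $\{F_{st}\}_{s\in[0,1]}$, giving the full claim. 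At the limit $t=\infty$, distinctness of the ideal endpoints lets Lemma~\ref{tech} apply to each $3$-simplex, yielding pointwise convergence $\widetilde{F}_t\to\widetilde{F}_\infty$~(3) and the ideal-tetrahedron description~(4) simultaneously; equivariance of $\widetilde{F}_\infty$ then follows by passing to limits. For~(5) one checks that if $(t_0,\ldots,t_k)\in\Delta^k$ has at least two positive coordinates, then $\sum t_i u_i$ is a positive combination of distinct future-directed null vectors, which is timelike, so $\widetilde{F}_\infty$ sends points off the $0$-skeleton into $\projectivedisc^3$, and equivariance produces the descent to $F_\infty$ on $M\setminus M^{(0)}$.

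The delicate step is the initial equivariant choice of the ideal endpoints $\widetilde{v}^{\,*}$: the distinctness hypothesis of Lemma~\ref{tech} must be arranged on every $3$-simplex of $\widetilde{\tri}$ at once, compatibly with the deck action. The reduction to finitely many orbits of $3$-simplices together with a Baire-style genericity argument on $S_{\infty}^{2}$ is what makes this feasible, and the standing assumption that every edge of $\tri$ is essential is precisely what guarantees the four vertices of each lifted tetrahedron are distinct to begin with, so that there are genuinely four rays to spin along.
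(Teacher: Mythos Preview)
Your proposal is correct and follows essentially the same approach as the paper: choose $\pi_1(M)$-equivariant unit-speed geodesic rays at the vertices with generically distinct ideal endpoints, extend over simplices by straightening via Proposition~\ref{straight}, and pass to the limit using Lemma~\ref{tech}. The only cosmetic differences are that the paper phrases the initial data as geodesics $l_i$ in $M$ (then lifted) rather than directly as ideal endpoints in $S_\infty^2$, and it derives (5) as an immediate consequence of (4) without spelling out your explicit argument that a nontrivial convex combination of distinct future null vectors is timelike.
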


\begin{proof}

Suppose $M^{(0)}=\{v_i\}$, and for each $v_i$, let $l_i$ be a (not
necessarily closed or simple) geodesic passing through $v_i$, and
denote $L=\{l_i\}$. Suppose that for every $3$-simplex
$\widetilde{\sigma}$ in $\widetilde{M}$ any two lifts of geodesics
in $L$ which pass through different vertices of $\widetilde{\sigma}$
have no endpoints in common. We parameterize
$l_i\co(-\infty,\infty)\rightarrow M$ by $l_i(0)=v_i$ and
$\|l_i'(t)\|_{\mathbb{H}^3}=1$ for all $i\in\{1,...,|M^{(0)}|\}$ and
$t\in\mathbb{R}.$ To construct
$\widetilde{F}_t\co\widetilde{M}\rightarrow\projectivedisc^3,$ for
each $i\in\{1,...,|M^{(0)}|\}$, pick $\widetilde{v_i}\in
p^{-1}(v_i)$ and a lift $\widetilde{l_i}$ of $l_i$ passing through
$\widetilde{v_i},$ and define
$$\widetilde{F}_t(\widetilde{v_i})=\exp_{\widetilde{v_i}}(t\cdot \widetilde{l_i}'(0)),$$
where $\exp_v$ is the exponential map at $v$.

We define $\widetilde{F}_t\co
p^{-1}(M^{(0)})\rightarrow\projectivedisc^3$ by
\begin{equation}
\widetilde{F}_t(\gamma\cdot\widetilde{v})=\gamma\cdot
\widetilde{F}_t(\widetilde{v}),\ \ \widetilde{v}\in p^{-1}(M^{(0)})
\text{ and }\gamma \in \pi_1(M),
\end{equation}
using the action of $\pi_1(M)$ on $\widetilde{M}$ and $\projectivedisc^3$ by deck transformations.

The map $\widetilde{F}_t$ is now extended to the $3$--simplices in
$\widetilde{\tri}$ by straightening maps. By part (1) of Propositon
\ref{straight}, this gives a well-defined map $\widetilde{F}_t\co
\widetilde{M} \to \projectivedisc^3$ since the straightening maps
agree on intersections of 3--simplices in $\widetilde{M}.$ By part
(2) of Proposition \ref{straight}, the map $\widetilde{F}_t$ is
$\pi_1(M)$--equivariant.

By equivariance, $\widetilde{F}_t$ descends to a piecewise smooth
map
$$F_t\co M\rightarrow M.$$
For any $t_0\in\mathbb{R}_+,$ the map
$H_{t_0}\co M\times[0,1]\rightarrow M$ defined by
$$H_{t_0}(x,t)=F_{t_0t}(x)$$
provides a homotopy between $F_0$ and $F_{t_0}.$

Take the homotopy between $id_M$ and $F_0$ to be the \emph{straight
line homotopy}. Namely, for $\widetilde{x}\in
\widetilde{M}\cong\projectivedisc^3,$ there is a unique geodesic
segment $l_{\widetilde{x}}\co[0,1]\rightarrow\projectivedisc^3$ such that
$l_{\widetilde{x}}(0)=\widetilde{x}$ and $l_{\widetilde{x}}(1)=\widetilde{F}_0(\widetilde{x}).$ The map $H_0\co
M\times[0,1]\rightarrow M$ defined by
$$H_0(x,t)=p\circ l_{\widetilde{x}}(t),$$
where $\widetilde{x}\in\widetilde{M}$ is any lift of $x$, provides a homotopy from $id_M$ to $F_0.$ This proves parts (1) and
(2).

We now define a $\pi_1(M)$-equivariant map
$$\widetilde{F}_{\infty}\co p^{-1}(M^{(0)})\rightarrow\overline{\projectivedisc^3}$$
as follows. For each $v \in M^{(0)},$ choose a lift
$\widetilde{v}\in p^{-1}(M^{(0)})$ and let $\widetilde{l}$ be the
corresponding lift of the associated element of $L.$ As $t$ tends to
infinity, $F_t(\widetilde{v})=\exp_{\widetilde{v}}(t\cdot
\widetilde{l}'(0))$ approaches an end-point
$\widetilde{v}^*\in S_{\infty}^2$ of $\widetilde{l}.$ Define
$\widetilde{F}_{\infty}(\widetilde{v})$ to be $\widetilde{v^*}$ and
$$\widetilde{F}_{\infty}(\gamma\cdot\widetilde{v})=\gamma\cdot\widetilde{v}^*,\ \ \gamma\in\pi_1(M).$$
By Lemma \ref{tech}, on each simplex $\widetilde{\sigma}$ in
$\widetilde{\tri},$ $\{\widetilde{F}_t|_{\widetilde{\sigma}}\}$
converges, so we can define
$$\widetilde{F}_{\infty}|_{\widetilde{\sigma}}\doteq\lim_{t\rightarrow\infty}\widetilde{F}_t|_{\widetilde{\sigma}},$$
and this agrees with the above definition on its vertices. By Part
(1) of Proposition \ref{straight}, $\widetilde{F}_{\infty}$ is
well-defined on intersections of 3--simplices in $\widetilde{M},$
and hence on $\widetilde{M}.$ Since each $\widetilde{F}_t$ is
$\pi_1(M)$-equivariant, so is $\widetilde{F}_{\infty}.$ This
completes the proof of part (3).

By the assumption on the end-points of the set of lifted geodesics, the
vertices of $\widetilde{F}_{\infty}(\widetilde{\sigma})$ are
distinct. Therefore $\widetilde{F}_{\infty}(\widetilde{\sigma})$ is
an ideal tetrahedron with four distinct vertices in
$\overline{\projectivedisc^3}.$ This proves (4), and (5) is a direct
consequence of (4).
\end{proof}

\begin{remark}
The hypothesis on $L$ avoids the situation in \cite{Th1}, where simplices of higher dimension may be mapped into the sphere at infinity. In the language of \cite{Th1}, our subcomplex at infinity consists merely of the 0--skeleton.
\end{remark}


\section{The parameter space}
\label{sec:parameter space}

Throughout this section, we suppose that $M$ is a triangulated, oriented, closed 3--manifold. We denote $\Sigma^k$ the set of all $k$--simplices of the triangulation in $M.$ As above, the triangulation may be semi-simplicial, so an element of $\Sigma^k$ may not be an embedded $k$--simplex in $M.$ Nevertheless, elements of $\Sigma^1$ will be termed \emph{edges} and elements of $\Sigma^3$ are termed \emph{tetrahedra.}


\subsection{The hyperbolic gluing equations}

Let $\Delta^3$ be the standard 3--simplex with a chosen orientation. Suppose the edges from one vertex of $\Delta^3$ are labeled by $e_1,$ $e_2$ and $e_3$ so that the opposite edges have the same labeling. Then the cyclic order of $e_1,$ $e_2$ and $e_3$ viewed from each vertex depends only on the orientation of the 3--simplex, i.e.\thinspace is independent of the choice of the vertices. It follows that, up to orientation preserving symmetries, there are two possible labelings, and we will fix one of these labelings.

Each pair of opposite edges corresponds to a normal isotopy class of quadrilaterals (\textit{normal quadrilateral} for short) in $\Delta^3.$ There is a natural cyclic order on the set of normal quadrilaterals induced by the cyclic order on the edges from a vertex, and this order is preserved by all orientation preserving symmetries of $\Delta^3.$

If $\sigma \in \Sigma^3,$ then there is an orientation preserving map $\Delta^3 \to \sigma$ taking the $k$--simplices in $\Delta^3$ to elements of $\Sigma^k,$ and which is a bijection between the sets of normal quadrilaterals. This map induces a cyclic order of the normal quadrilaterals in $\sigma,$ and we denote the corresponding 3--cycle $\tau_\sigma.$ It follows from the above remarks, that this order is independent of the choice of the map. We define
$$\tau = \prod_{\sigma \in \Sigma^3} \tau_\sigma.$$

Let $e\in \Sigma^1,$ and $q$ be a normal quadrilateral in $\sigma.$ The index $i(q,e)$ is the integer $0,$ $1$ or $2$ defined as follows: $i(q,e)=0$ if $e$ is not an edge of $\sigma,$ $i(q,e)=1$ if $e$ is the only edge in $\sigma$ facing $q,$ and $i(q,e)=2$ if $e$ are the two edges in $\sigma$ facing $q.$

\begin{definition}\label{ThGE}
The parameter space $\para(M)$ is the set of all points $Z=(z_q)\in(\mathbb{C}\setminus\{0,1\})^Q,$ where $Q$ is the set of all normal quadrilaterals, satisfying the following two conditions:
\begin{enumerate}
\item[(a)] for each edge $e$ in $M,$
\begin{equation}\label{eq:gluing}
\prod_{q\in Q} z_q^{i(q,e)}=1,
\end{equation}
\item[(b)] for each $q\in Q$
\begin{equation}\label{eq:parameter}
z_{\tau q}=\frac{1}{1-z_{q}}.
\end{equation}
\end{enumerate}
\end{definition}
Equation (\ref{eq:gluing}) is the \emph{hyperbolic gluing equation} of $e,$ and (\ref{eq:parameter}) is the \emph{parameter relation} of $q.$

Let $z_{\sigma}=(z_{q},z_{\tau q},z_{\tau^2q})$ be the triple of complex numbers assigned to the three normal quadrilaterals in the tetrahedron $\sigma.$ We will often write $Z \in \para(M)$ as $Z = (z_\sigma).$


\subsection{The (lack of) geometry of solutions}

Unlike in the case of cusped hyperbolic 3--manifolds, solutions to the hyperbolic gluing equations cannot be used to directly construct a hyperbolic metric on a closed 3--manifold, as highlighted by the following result.

\begin{proposition}
Let $M$ be a triangulated, closed, oriented 3--manifold,
and $Z\in\para(M).$ Then there is at least one $3$--simplex $\sigma\in\Sigma^3$
such that $z_{\sigma}\in-\overline{\mathbb{H}},$ the closure of the lower
half plane.
\end{proposition}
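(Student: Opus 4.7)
The plan is to argue by contradiction: assume $z_q$ lies in the open upper half plane $\mathbb{H}$ for every normal quadrilateral $q\in Q$, and derive a contradiction from $\chi(M)=0$ by comparing two angle sums. Since the M\"obius transformations $z\mapsto\tfrac{1}{1-z}$ and $z\mapsto\tfrac{z-1}{z}$ both preserve $\mathbb{H}$ (a direct check on imaginary parts), this assumption is equivalent to the negation of the conclusion: if one coordinate of $z_\sigma$ lies in $\mathbb{H}$, all three do.

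First I would compute angle sums per tetrahedron. From the parameter relations (\ref{eq:parameter}) a direct calculation gives $z_q\cdot z_{\tau q}\cdot z_{\tau^2 q}=-1$. Taking principal arguments, each lies in $(0,\pi)$ by hypothesis, so their sum lies in $(0,3\pi)$; being congruent to $\pi$ modulo $2\pi$, it must equal exactly $\pi$. Summing over all $|\Sigma^3|$ tetrahedra gives
$$\sum_{q\in Q}\arg(z_q)=\pi\,|\Sigma^3|.$$

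Next I would take arguments in the edge gluing equation (\ref{eq:gluing}). For each $e\in\Sigma^1$ there is an integer $k_e$ with
$$\sum_{q\in Q} i(q,e)\,\arg(z_q)=2\pi k_e,$$
and since each summand is nonnegative and the edge $e$ is the image of at least one tetrahedron edge (so some $i(q,e)>0$), the left side is strictly positive, forcing $k_e\geq 1$. Swapping the order of summation and using the identity $\sum_{e\in\Sigma^1} i(q,e)=2$ (each normal quadrilateral corresponds to a pair of opposite edges of its tetrahedron, counted with their identification multiplicities in $M$) yields
$$2\pi\sum_{e\in\Sigma^1} k_e \;=\; 2\sum_{q\in Q}\arg(z_q) \;=\; 2\pi\,|\Sigma^3|,$$
so $|\Sigma^1|\leq \sum_e k_e = |\Sigma^3|$.

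Finally, I would invoke the combinatorics of a closed triangulated $3$-manifold to contradict this inequality: each $2$-face bounds exactly two $3$-simplices, hence $|\Sigma^2|=2|\Sigma^3|$, and then $\chi(M)=0$ gives $|\Sigma^1|=|\Sigma^0|+|\Sigma^3|>|\Sigma^3|$. The only bookkeeping subtlety is pinning down principal branches of $\arg$ consistently, and this is taken care of by the first step, which confines every $\arg(z_q)$ to the interval $(0,\pi)$; everything else is a linear algebra double-count.
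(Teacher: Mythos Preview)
Your argument is correct. The paper reaches the same contradiction but packages it differently: it observes that the assumption yields an angle structure on $M$ (angles in $(0,\pi)$, vertex sum $\pi$ in each tetrahedron, edge sum $2k_e\pi$ with $k_e\ge 1$), passes to the induced angle structure on each vertex link, and applies the combinatorial Gau\ss--Bonnet formula there. Since every triangle in the link has angle sum $\pi$, the total combinatorial area vanishes, and every link vertex has nonpositive curvature $2(1-k_e)\pi$; this contradicts $2\pi\chi(S^2)=4\pi$.

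Your route is a global repackaging of the same bookkeeping: instead of Gau\ss--Bonnet on each spherical link, you use $\chi(M)=0$ together with $|\Sigma^2|=2|\Sigma^3|$ and the double count $\sum_e i(q,e)=2$. Summing the paper's link identities over all vertices of $M$ recovers exactly your equation $\sum_e k_e=|\Sigma^3|$. Your version is slightly more self-contained (no appeal to an external combinatorial Gau\ss--Bonnet statement), while the paper's version localises the obstruction to a single vertex link, which is conceptually pleasant.
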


\begin{proof} Suppose for all $\sigma,$ $z_{\sigma}\in\mathbb{H},$ then by
taking arguments, we would have got an \emph{angle structure} on $M,$ which is an assignment of real numbers, termed angles, in the range $(0,\pi)$ to each edge of each $3$--simplex such that the sum of all angles at each vertex of each $3$--simplex is $\pi,$ and such that around each edge $e$ of $\tri$ the sum of angles is $2k_e\pi$ with $k_e \ge 1.$ This induces a combinatorial angle structure on the link of each vertex in $M,$ that is, a function $a \co \{ \text{all corners in the link} \} \to \R.$ Since all 2--cells in the induced triangulation of the vertex link are triangles and have angle sum $\pi,$ the total combinatorial area $A(a)$ of the angle structure is zero. The combinatorial curvature $K_v$ at vertex $v$ of the induced triangulation is $2 \pi$ minus the sum of angles at $v,$ which equals $2(1-k_e)\pi,$ where $e$ is the edge containing $v.$ Whence $K_v$ is non-positive. Since the link of each vertex in $M$ is a sphere, the combinatorial Gau\ss--Bonnet formula (see \cite{LT}, Proposition 13) now implies that
$$
4 \pi = 2 \pi \chi(S^2) = A(a) + \sum K_v \le 0,
$$
which gives a contradiction.
\end{proof}

The above result is sharp in the sense that there may not be any negatively oriented tetrahedra. For instance, the one-tetrahedron triangulation of $L(4,1)$ yields the unique solution $(-1, \frac{1}{2}, 2).$

We remark that taking the arguments of $Z\in\para(M)$ defines an \textit{$S^1$--angle structure} on the closed $3$-manifold $M,$ which is the counterpart of an angle structure on a $3$--manifold with torus cusps (see \cite{Lu} for results on $S^1$--angle structures).


\subsection{The volume of solutions}

Recall that the Lobachevsky function is defined by:
$$\Lambda(\alpha) = -\int_0^{\alpha}\ln |2\sin t|dt$$
for any $\alpha \in \R.$

\begin{definition}\label{vol}
The volume of $z_{\sigma}$ is defined to be the sum of the Lobachevsky
functions
\begin{equation*}
\vol(z_{\sigma})=\Lambda(\arg(z_{q})) + \Lambda(\arg(z_{\tau q})) + \Lambda(\arg(z_{\tau^2q})),
\end{equation*}
and the volume of $(z_{\sigma})=Z \in \para(M)$ is defined by
$$\vol(Z)=\sum_{\sigma\in \Sigma^3}\vol(z_{\sigma}).$$
\end{definition}

The hyperbolic gluing equations are defined over the integers, so $Z \in \para(M)$ implies $\overline{Z} \in \para(M),$ where the latter point is obtained by taking the complex conjugates of all coordinates. It follows from the properties of the Lobachevsky function (see \cite{Mi}) that $\vol(Z) = - \vol(\overline{Z}).$


\subsection{The shape parameters of an ideal tetrahedron}
Let $\sigma$ be an ideal geodesic tetrahedron in $\mathbb{H}^3$ with vertices $\{v_i\}\subset S_{\infty}^2,$ $i=1,...,4.$ The order $(v_1,...,v_4)$ determines an \textit{orientation} of $\sigma.$ We call $\sigma$ \textit{positive} if the orientation of $\sigma$ coincides with the orientation of $\mathbb{H}^3,$ \textit{negative} if the orientation of $\sigma$ differs from the one of $\mathbb{H}^3,$ and flat if $\sigma$ lies in a totally geodesic plane.

\begin{definition}\label{shape}
Let $e_{ij}$ be the edge from $v_i$ to $v_j,$ and identify
$S_{\infty}^2$ with $\mathbb{C}\cup\{\infty\},$ then the shape
parameter of $\sigma$ at $e_{ij}$ (or edge invariant at $e_{ij}$) is defined by the cross-ratio

\begin{equation*}
\begin{split}
z_{ij}\doteq& (v_i,v_j;v_k,v_l)\\
=&\frac{v_i-v_k}{v_i-v_l}\cdot\frac{v_j-v_l}{v_j-v_k}\\
\end{split}
\end{equation*}

where $(i,j,k,l)$ is an even permutation of $(1,2,3,4).$
\end{definition}

A direct cross-ratio calculation shows the following well known:

\begin{proposition}\label{wellknown}
With the above notation:
\begin{enumerate}
\item[(1)] For all $\{i,j\} \cup \{k,l\} = \{1,2,3,4\},$
$$z_{ij}=z_{kl},\ \,$$
so opposite edges share the same shape parameter, and we can denote the shape parameter of $\sigma$ at $e_{ij}$ and $e_{kl}$ by $z_q,$ where $q$ is the normal quadrilateral facing $e_{ij}$ and $e_{kl},$ and
\item[(2)] if the 3--cycle $\tau_\sigma$ determines the cyclic order of the normal quadrilaterals in $\sigma,$ then
$$z_{\tau q}=\frac{1}{1-z_{q}}.$$
\end{enumerate}
\end{proposition}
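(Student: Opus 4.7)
The plan is to treat (1) and (2) separately, but in both cases to reduce to a direct computation in $\C\cup\{\infty\}$ after exploiting the Möbius invariance of the cross-ratio (which is the analytic avatar of part (2) of Proposition \ref{straight}). Throughout, we identify $S^2_\infty$ with $\C\cup\{\infty\}$, use the standard extension of the cross-ratio to $\infty$, and note that once the four vertices $v_1,\dots,v_4$ are pairwise distinct the cross-ratios $z_{ij}$ appearing in Definition~\ref{shape} are well-defined elements of $\C\setminus\{0,1\}$.

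For part (1), I would start from the classical symmetry of the cross-ratio,
$$(a,b;c,d)=(c,d;a,b),$$
which is a one-line algebraic identity from the defining formula. If $(i,j,k,l)$ is an even permutation of $(1,2,3,4)$, then so is $(k,l,i,j)$ (the two differ by the product of the transpositions $(i\,k)$ and $(j\,l)$). Hence both $z_{ij}$ and $z_{kl}$ are computed from even permutations, and the displayed cross-ratio identity gives $z_{ij}=z_{kl}$. This legitimises writing $z_q$ for the common value on a pair of opposite edges, indexed by the normal quadrilateral $q$ that separates them.

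For part (2), I would first use $\PSL$-equivariance of the cross-ratio to normalise $v_1=0$, $v_2=\infty$, $v_3=1$, and $v_4=w$ for some $w\in\C\setminus\{0,1\}$. A direct evaluation of the three cross-ratios (taking care with even permutations and with the conventions at $\infty$) yields
$$z_{12}=w,\qquad z_{13}=\frac{1}{1-w},\qquad z_{14}=\frac{w-1}{w},$$
and in particular confirms the algebraic cycle $z\mapsto \tfrac{1}{1-z}\mapsto \tfrac{z-1}{z}\mapsto z$ on the three shape parameters. This establishes the formula in (2) \emph{provided} the geometric cyclic order $\tau_\sigma$ on the three quadrilaterals coincides with the order $q_{12}\to q_{13}\to q_{14}$ produced by this calculation.

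The only genuine content, then, is matching the two cyclic orders, and this is where I expect any subtlety to lie. My plan is to verify the match on a single oriented model, say $w\in\C$ with $\operatorname{Im}(w)>0$, so that $\sigma$ is positively oriented and one can read off both (i) the cyclic order $\tau_\sigma$ determined by the labelling $e_1,e_2,e_3$ of the three edges at the vertex $v_1$ as viewed from outside, and (ii) the algebraic cycle above. Both orders are locally constant on the connected set of positively oriented tuples, and both are reversed by an orientation-reversing isometry (since the cross-ratio goes to its complex conjugate and the combinatorial cycle $\tau_\sigma$ is inverted), so agreement on one positively oriented tuple upgrades to agreement on all ordered tuples. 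Modulo this normalisation argument, (2) is then just the displayed identity. No step is genuinely hard; the only thing that can go wrong is a sign or orientation convention, and this is handled by checking one explicit configuration.
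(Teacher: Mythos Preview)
Your approach is exactly what the paper intends: it merely states ``A direct cross-ratio calculation shows the following well known'' and gives no further argument, so your use of the symmetry $(a,b;c,d)=(c,d;a,b)$ together with the parity observation for (1), and a normalised computation plus orientation-matching for (2), is a faithful expansion of that one line. One harmless slip: with the paper's convention $(v_i,v_j;v_k,v_l)=\frac{v_i-v_k}{v_i-v_l}\cdot\frac{v_j-v_l}{v_j-v_k}$ and your normalisation $v_1=0,\,v_2=\infty,\,v_3=1,\,v_4=w$, the values are actually $z_{12}=1/w$, $z_{13}=w/(w-1)$, $z_{14}=1-w$, but these still cycle under $z\mapsto 1/(1-z)$ in the order $q_{12}\to q_{13}\to q_{14}$, so your conclusion and the orientation-matching argument go through unchanged.
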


For an ideal tetrahedron $\sigma$ with shape parameters $z_{q},z_{\tau q}$ and $z_{\tau^2q},$ the hyperbolic volume is calculated by Milnor as
\begin{equation}\label{Milnor}
\vol(\sigma)=\Lambda(\arg(z_{q})) + \Lambda(\arg(z_{\tau q})) + \Lambda(\arg(z_{\tau^2q})).
\end{equation}
Therefore, when $\sigma$ is positive, its shape parameters are in the upper half plane and $\vol(\sigma)>0;$ when $\sigma$ is flat, its shape parameters are real and $\vol(\sigma)=0$; and when $\sigma$ is negative, its shape parameters are in the lower half plane and $\vol(\sigma)<0.$


\subsection{The associated representation}
\label{subsec:Yoshida}

The following is essentially the construction described by Yoshida for cusped 3--manifolds in \cite{Yo}, \S 5, though we will take more care of details which are needed for our application.

We assume that the triangulation of $M$ consists of a pairwise disjoint union of standard 3--simplices, $\widetilde{\Delta} = \cup_{k=1}^{n} \widetilde{\Delta}_k,$ together with a collection $\Phi$ of Euclidean isometries between standard 2--simplices in $\widetilde{\Delta};$ termed \emph{face pairings}. Then $M = \widetilde{\Delta} / \Phi.$ Since $M$ is oriented, we may assume that all 3--simplices in $\widetilde{\Delta}$ are coherently oriented, so that each face pairing is orientation reversing.

Denote $p \co \widetilde{M} \to M$ the universal cover of $M.$ Lift the triangulation of $M$ to a $\pi_1(M)$--equivariant triangulation of $\widetilde{M}.$ Since each edge is essential, every tetrahedron in $\widetilde{M}$ is embedded.

Let $Z \in \para(M).$ Then a continuous map $$D_Z \co \widetilde{M} \to \overline{\projectivedisc}^3$$ can be defined inductively as follows.

Pick an oriented 3--simplex, say $\sigma = [e_0, e_1, e_2, e_3]$ in $\widetilde{M}.$ It inherits a well-defined shape parameter $z_\sigma$ from $p(\sigma),$ and hence well-defined edge invariants.
Choose $v_0, v_1, v_2, v_3\in \partial \projectivedisc^3$ such that the cross ratio $z_{ij} = (v_i,v_j;v_k,v_l)$ agrees with the edge invariant of $\sigma$ at edge $[e_i, e_j].$ Then define $D_Z(\sigma)$ as the composition of the identification $\sigma=\Delta^3$ with the straightening map $\sigma_{v_0, v_1, v_2, v_3}.$

Now suppose $D_Z$ is defined on a triangulated subset $W$ of $\widetilde{M}.$ Let $\sigma^3$ be a 3--simplex in $\widetilde{M}$ which shares at least a 2--simplex, say $\sigma^2,$ with $W.$ Suppose $\sigma^3=[w_0,w_1,w_2,w_3],$ and
$\sigma^2 = [w_0,w_1,w_2].$ Then define $D_Z(w_3) \in \partial \projectivedisc^3$ such that the cross ratio
$$(D_Z(w_0), D_Z(w_1); D_Z(w_2), D_Z(w_3))$$
equals the edge invariant of $\sigma^3$ at $[w_0,w_1].$ Even if $w_3 \in W,$ this is well-defined since the hyperbolic gluing equations are satisfied.

Then define the extension of $D_Z$ to $W \cup \sigma$ by letting $D_Z(\sigma)$ be the composition of the identification $\sigma=\Delta^3$ with the straightening map $$\sigma_{D_Z(w_0), D_Z(w_1), D_Z(w_2), D_Z(w_3)}.$$ This is well defined since all maps are straight and the hyperbolic gluing equations are satisfied. This completes the definition of $D_Z.$
Notice that by construction, we have
$$D_Z (\widetilde{M} \setminus \widetilde{M}^{(0)} ) \subset \projectivedisc^3,$$
so only the vertices are mapped to the sphere at infinity.

There is a natural isomorphism $\pi_1(M \setminus M^{(0)}) \cong \pi_1(M)$ by the Seifert-Van Kampen theorem. For each $\gamma \in \pi_1(M),$ there is a unique element $\rho_Z(\gamma)\in \PSL$ such that
$$D_Z(\gamma x) = \rho_Z(\gamma) D_Z(x)$$
for all $x \in \widetilde{M}.$
To see this, define $\rho_Z(\gamma)$ to be the isometry which maps $D_Z(\sigma)$ to $D_Z(\gamma\sigma)$ for any 3-simplex in $\widetilde{M}.$ This is well-defined since the hyperbolic gluing equations are satisfied. We therefore have an associated representation $\rho_Z\co \pi_1(M)\to \PSL.$ This representation is uniquely determined by the map $D_Z.$ The only choice in the construction of $D_Z$ is the initial placement of a 3--simplex, and it is easy to see that a different choice results in a representation which is conjugate to $\rho_Z$ by an orientation preserving isometry of $\mathbb{H}^3.$


\subsection{Representation volume}

Given the closed 3--manifold $M$ and any representation $\rho\co \pi_1(M) \to \PSL,$ the volume of $\rho$ is defined as follows (see Dunfield \cite{Du} for details). Choose any piecewise smooth $\rho$--equivariant map $f\co \widetilde{M} \to \mathbb{H}^3.$ The form $f^*(\vol_{\mathbb{H}^3})$ descends to a form on $N.$ The volume of $\rho$ is the value of the integral of this form over $M$:
$$\vol(\rho) = \int_M f^*(\vol_{\mathbb{H}^3}).$$
The volume is independent of $f$ as any two such maps are equivariantly homotopic by a straight line homotopy (cf.\thinspace the proof of Proposition~\ref{pro:spin}). The above is a slight modification of Dunfield's definition in that he takes the absolute value of the integral, whilst we maintain dependence on the orientation of $M.$

With the notation of the previous subsection, we have the following result:

\begin{lemma}\label{lem:rep vol}
Let $M$ be a closed, oriented, triangulated $3$--manifold with the property that all edges in $M$ are essential.
Then $\vol(\rho_Z) = \vol(Z)$ for each $Z \in \para(M).$
\end{lemma}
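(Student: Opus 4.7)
The plan is to evaluate $\vol(\rho_Z)$ on a family of $\rho_Z$-equivariant maps that spin out to the developing map $D_Z$ of Section~\ref{subsec:Yoshida}, and then read off the volume of $Z$ in the limit by applying Lemma~\ref{tech} tetrahedron by tetrahedron, together with Milnor's formula.

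First I would build a family of piecewise smooth, $\rho_Z$-equivariant maps $f_t\co \widetilde{M}\to \projectivedisc^3$, $t\in[0,\infty)$, by the following variant of Thurston's spinning construction. For each vertex class $v_i\in M^{(0)}$, pick a lift $\widetilde{v}_i$ and a basepoint $p_i\in\projectivedisc^3$, and let $f_t(\widetilde{v}_i)$ be the point at hyperbolic distance $t$ from $p_i$ along the geodesic ray from $p_i$ toward the ideal point $D_Z(\widetilde{v}_i)\in S^2_\infty$. Extend to the rest of the $0$-skeleton of $\widetilde{\tri}$ by equivariance, $f_t(\gamma\widetilde{v}_i)=\rho_Z(\gamma)f_t(\widetilde{v}_i)$, and extend to all of $\widetilde{M}$ by straightening on each $3$-simplex. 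Proposition~\ref{straight}(2) makes $f_t$ globally $\rho_Z$-equivariant, and its image lies in the open Klein model $\projectivedisc^3=\mathbb{H}^3$ for every finite $t$, so $f_t$ is an admissible choice in the definition of $\vol(\rho_Z)$.

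Next, since $\vol(\rho_Z)=\int_M f_t^*(\vol_{\mathbb{H}^3})$ is independent of $t$, I would decompose the integral over the $3$-simplices of $M$ to obtain
$$\vol(\rho_Z)=\sum_{\sigma\in\Sigma^3}\vol\bigl(f_t(\widetilde{\sigma})\bigr),$$
where each term is the signed hyperbolic volume of the straight tetrahedron with vertices $f_t(\widetilde{v}_{i_k})$ for a chosen lift $\widetilde{\sigma}\subset\widetilde{M}$. Letting $t\to\infty$, these vertices converge to the ideal points $D_Z(\widetilde{v}_{i_k})$, which are pairwise distinct since their defining cross ratios are the shape parameters $z_\sigma\in(\C\setminus\{0,1\})^3$. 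Lemma~\ref{tech} then forces each signed volume to converge to the signed volume of the limiting ideal tetrahedron, which by Milnor's formula~\eqref{Milnor} equals $\vol(z_\sigma)$. Summing and passing to the limit in the displayed identity yields $\vol(\rho_Z)=\vol(Z)$.

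The delicate point I expect to have to check carefully is the tracking of orientations: for negatively oriented $\sigma$, the shape parameters lie in the lower half plane and $\vol(z_\sigma)<0$, so I need the signed volume $\vol(f_t(\widetilde{\sigma}))$ computed from $f_t^*(\vol_{\mathbb{H}^3})$ with the orientation induced from $M$ to carry the matching sign uniformly in $t$. This should reduce to confirming that the cross-ratio orientation conventions used to build $D_Z$ in Section~\ref{subsec:Yoshida} agree with the sign conventions used to define $\vol(z_\sigma)$ via the Lobachevsky function, after which the argument is simplex-by-simplex and the convergence of volumes in Lemma~\ref{tech} closes the loop.
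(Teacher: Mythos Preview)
Your proposal is correct and follows essentially the same route as the paper: both build a $\rho_Z$-equivariant family of straight-simplex maps by spinning vertex images along geodesic rays toward the ideal vertices $D_Z(\widetilde v)$, invoke the $t$-independence of $\int_M f_t^*(\vol_{\mathbb{H}^3})$, and pass to the limit via Lemma~\ref{tech} and Milnor's formula to identify the result with $\sum_\sigma \vol(z_\sigma)$. The orientation concern you flag is handled implicitly in the paper by the sign conventions already built into Definitions~\ref{vol} and~\ref{shape} and equation~\eqref{Milnor}.
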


The proof of the lemma is given in Section~\ref{sec:proofs} below. Dunfield \cite{Du} proves the following rigidity result for representation volume, which he attributes to Thurston, Gromov and Goldman:

\begin{theorem}[Thurston-Gromov-Goldman]\label{Dun}
If $M$ is a compact hyperbolic 3--manifold, and
$\rho\co\pi_1(M)\rightarrow\PSL$ a
representation with $\vol(\rho)=\vol(M),$ then $\rho$
is discrete and faithful.
\end{theorem}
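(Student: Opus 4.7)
The plan is to combine Gromov's simplicial-volume identity $\vol(M) = v_3 \|M\|_\Delta$, where $v_3$ is the volume of the regular ideal tetrahedron and $\|\cdot\|_\Delta$ is Gromov's simplicial norm, with a sharp upper bound $|\vol(\rho)| \le v_3 \|M\|_\Delta$ valid for every representation $\rho$, and then to extract discreteness and faithfulness from the equality case by means of Gromov's smearing construction.

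First I would build a piecewise smooth $\rho$--equivariant map $f\co\widetilde{M}\to\mathbb{H}^3$ by straightening the image of a $\pi_1(M)$--equivariant triangulation of $\widetilde{M}$, using the constructions of Section~\ref{sec:straightening}. Since every straight ideal simplex in $\mathbb{H}^3$ has volume at most $v_3$ in absolute value, expressing any real fundamental cycle $c=\sum a_i\sigma_i$ of $M$ via $f$ gives
\begin{equation*}
|\vol(\rho)| \;=\; \Bigl|\sum_i a_i \int_{\sigma_i} f^*(\vol_{\mathbb{H}^3})\Bigr| \;\le\; v_3 \sum_i |a_i|,
\end{equation*}
and taking the infimum over fundamental cycles gives $|\vol(\rho)| \le v_3\|M\|_\Delta = \vol(M)$. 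The hypothesis therefore places $\rho$ at the boundary of this inequality.

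Next I would invoke Gromov's smearing: identify $M=\mathbb{H}^3/\Gamma_0$ with $\Gamma_0$ the holonomy of the given hyperbolic structure, fix a positively oriented regular ideal tetrahedron $\tau_0\subset\mathbb{H}^3$, and smear $\tau_0$ against normalized Haar measure on $\Gamma_0\backslash\Isom^+(\mathbb{H}^3)$. This produces a measurable real fundamental cycle whose total mass equals $\|M\|_\Delta$ and whose support consists entirely of regular ideal tetrahedra, each of signed volume exactly $+v_3$. Evaluating $\vol(\rho)$ as the integral of $f^*(\vol_{\mathbb{H}^3})$ against this smeared cycle and imposing $\vol(\rho)=v_3\|M\|_\Delta$ forces that, for almost every smearing parameter, the $f$--image of the corresponding regular ideal tetrahedron is again a positively oriented regular ideal tetrahedron of volume $v_3$, since this is the only way each summand in the integral can equal $+v_3$.

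The hard part will be the final rigidity step. The preceding information translates into a statement about the $\rho$--equivariant boundary extension $\partial f\co S_\infty^2\to S_\infty^2$: a positive-measure family of ordered $4$--tuples in $S_\infty^2$ whose convex hulls are regular ideal tetrahedra is sent to $4$--tuples with the same property. A Möbius-geometric rigidity lemma (essentially due to Thurston, with later measure-theoretic refinements by Gromov and Haagerup--Munkholm) then forces $\partial f$ to agree almost everywhere with the boundary action of a single element $g\in\PSL$. Consequently $\rho$ is conjugate into $\Gamma_0$ by $g$, and in particular is discrete and faithful. I expect this measure-theoretic rigidity of regular ideal tetrahedra to be the real content of the theorem; the simplicial-volume bound and the smearing construction are by now standard, while the rigidity lemma is the genuine obstacle.
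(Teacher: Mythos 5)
You should first note that the paper does not prove this statement at all: it is imported verbatim from Dunfield \cite{Du}, Theorem 6.1 (where it is attributed to Thurston, Gromov and Goldman), and is used in the paper only as a black box via Proposition~\ref{dun} and Corollary~\ref{cor:shape vol}. So there is no in-paper proof to compare with; measured against the cited source, your outline follows the same classical strategy (straighten a $\rho$-equivariant pseudo-developing map, bound $|\vol(\rho)|$ by $v_3\|M\|_\Delta=\vol(M)$, analyse the equality case via smearing/efficient cycles, and finish with rigidity of regular ideal tetrahedra), so the route is the intended one rather than a new one.

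As a proof, however, the sketch has a genuine gap beyond the rigidity lemma you flag. You invoke ``the $\rho$-equivariant boundary extension $\partial f\co S^2_\infty\to S^2_\infty$'' as though the straightened map $f$ automatically possessed one. For an arbitrary representation $\rho$ --- a priori non-discrete, possibly elementary --- there is no continuous, nor even canonical measurable, extension of $f$ to the sphere at infinity, and without such a map the key assertion ``a positive-measure family of regular ideal tetrahedra is sent to regular ideal tetrahedra'' has nothing to be applied to. The cited proof gets around this either by extracting the boundary information from limits of straightened simplices of efficient (nearly regular ideal) fundamental cycles, or, in later treatments, by first discarding elementary representations (which have zero volume, hence are excluded by $\vol(\rho)=\vol(M)>0$) and then using a Furstenberg-type measurable equivariant boundary map; one of these devices must be supplied. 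Two smaller points: the chain obtained by smearing $\tau_0$ over $\Gamma_0\backslash\Isom^+(\mathbb{H}^3)$ is not a cycle --- one must antisymmetrise using orientation-reversing isometries (so negatively oriented regular simplices enter with negative weights) and then either work in measure homology, knowing it computes the simplicial norm, or approximate by honest fundamental cycles before evaluating $\vol(\rho)$; and the attribution of the final rigidity lemma is off --- Haagerup--Munkholm concerns maximality of the regular simplex volume in $\mathbb{H}^n$ for $n\ge 3$ (in dimension $3$ this is Milnor's computation), while the measurable M\"obius-rigidity step you need is the Gromov--Thurston argument, which in dimension $3$ requires the equivariance under the lattice $\Gamma_0$ and an ergodicity argument rather than a naive density argument.
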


Dunfield \cite{Du} in fact proves slightly more (see also \cite{FK} for a further generalisation), namely:

\begin{proposition}\label{dun} Let $M$ be a compact hyperbolic $3$-manifold, and
$\rho\co\pi_1(M)\rightarrow\PSL$ be a
representation of the fundamental group of $M.$ Then $-\vol(M) \le \vol(\rho) \le \vol(M).$ Moreover, if $\vol(\rho)=\pm \vol(M),$
then $\rho$ is discrete and faithful.
\end{proposition}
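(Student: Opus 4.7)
The plan is to establish the bound $|\vol(\rho)| \le \vol(M)$ via Gromov's simplicial volume, and then extract rigidity in the equality case. First, fix any $\rho$--equivariant, piecewise smooth map $f \co \widetilde{M} \to \mathbb{H}^3$; such maps exist because $\mathbb{H}^3$ is contractible. If $c = \sum a_i \sigma_i$ is any real singular cycle representing $[M] \in H_3(M;\R)$ and $\widetilde{\sigma}_i$ are lifts of $\sigma_i$ to $\widetilde{M}$, then replacing each $f \circ \widetilde{\sigma}_i$ by its geodesic straightening alters the integral $\int_{\sigma_i} f^*(\vol_{\mathbb{H}^3})$ only through coboundary terms on faces, which cancel in pairs since $\partial c = 0$. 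Hence
\begin{equation*}
\vol(\rho) \;=\; \sum a_i \, \vol\bigl(\mathrm{str}(f \circ \widetilde{\sigma}_i)\bigr).
\end{equation*}

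Since every straight hyperbolic $3$--simplex has volume bounded in absolute value by $v_3$, the volume of a regular ideal tetrahedron, we get $|\vol(\rho)| \le v_3 \|c\|_1$. Taking the infimum over all real fundamental cycles $c$ yields $|\vol(\rho)| \le v_3 \|M\|_\Delta$, where $\|M\|_\Delta$ denotes Gromov's simplicial volume. By the Gromov--Thurston proportionality theorem for closed hyperbolic $3$--manifolds, $v_3 \|M\|_\Delta = \vol(M)$, which establishes the two-sided inequality.

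For the rigidity statement, assume $\vol(\rho) = \vol(M)$ (the case $\vol(\rho) = -\vol(M)$ follows by composing with an orientation--reversing isometry of $M$). The strategy is to feed Thurston's \emph{smeared} fundamental cycles into the inequality above. These are constructed by averaging straight simplices over the Haar measure on $\Isom(\mathbb{H}^3)$ pushed through the holonomy representation of $M$, and they almost realize $\|M\|_\Delta$. In the equality case, the pointwise bound on each constituent simplex must saturate, so the straightened simplices $\mathrm{str}(f \circ \widetilde{\sigma}_i)$ are asymptotically regular ideal tetrahedra of volume $v_3$. Since by Milnor's formula the regular ideal tetrahedron is the unique maximizer of volume up to isometry, this rigidity forces $f$ to be, up to equivariant homotopy, a local isometry $\widetilde{M} \to \mathbb{H}^3$, pinning down $\rho$ as the holonomy representation of $M$ up to conjugation, and in particular as discrete and faithful.

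The main obstacle is the rigidity step. The inequality $|\vol(\rho)| \le \vol(M)$ is a clean consequence of straightening once Gromov's norm is in hand, but extracting discreteness and faithfulness from the equality case requires a delicate limiting argument (in bounded cohomology, or via smearing) and an essential use of the uniqueness of the regular ideal tetrahedron among maximum-volume straight simplices. This is the heart of the Thurston--Gromov--Goldman rigidity theorem, and Dunfield's paper carries out these limiting arguments in detail.
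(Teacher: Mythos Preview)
The paper does not supply its own proof of this proposition: it is stated as a result due to Dunfield (extending the Thurston--Gromov--Goldman volume rigidity of Theorem~\ref{Dun}) and is simply cited, with a further reference to Francaviglia--Klaff for a generalisation. So there is no in-paper argument to compare against.

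That said, your sketch is a faithful outline of the approach in those references. The inequality $|\vol(\rho)| \le v_3\,\|M\|_\Delta = \vol(M)$ via straightening of an arbitrary real fundamental cycle, together with the Gromov--Thurston proportionality, is exactly the standard route and is clean as you present it. For the equality case, your plan --- feed in Thurston's smeared (measure-homology) cycles, observe that saturation of the inequality forces almost every straightened simplex to be regular ideal, and then upgrade this to $f$ being equivariantly homotopic to an isometry --- is the Thurston--Gromov strategy Dunfield follows. Your closing caveat that the limiting argument is the genuinely delicate part, and that Dunfield's paper is where it is carried out, is precisely the stance the present paper takes as well.

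One small tightening: in the rigidity paragraph you write that ``the pointwise bound on each constituent simplex must saturate,'' but with smearing the cycle is a measure, not a finite sum, so the conclusion is that the set of simplices on which the bound fails has measure zero; passing from this to a local isometry is where the real work lies (and where Dunfield invokes the arguments of Thurston and Gromov). You clearly recognise this, but the phrasing could make the measure-theoretic nature explicit.
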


Putting Lemma~\ref{lem:rep vol} and Proposition~\ref{dun} together, we have:

\begin{corollary}\label{cor:shape vol}
Let $M$ be a closed, oriented, triangulated, hyperbolic $3$--manifold with the property that all edges in $M$ are essential. Then $-\vol (M) \le \vol(Z) \le \vol(M)$ for each $Z \in \para(M),$ and if $\vol(Z) = \pm \vol(M),$ then $\rho_Z$ is discrete and faithful.
\end{corollary}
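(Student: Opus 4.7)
The plan is to assemble the corollary from the two results that immediately precede it, essentially by substitution. First, I would apply Lemma~\ref{lem:rep vol}, which under the essentiality hypothesis on the edges identifies the combinatorial volume with the representation volume: $\vol(\rho_Z) = \vol(Z)$ for every $Z \in \para(M)$. The hypothesis that all edges are essential enters here through the construction of $\rho_Z$ in Section~\ref{subsec:Yoshida}, which requires the developing map $D_Z$ to be well defined on $\widetilde{M}$; for the corollary itself no further use of the hypothesis is needed.

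Next, I would feed $\rho_Z$ into Proposition~\ref{dun}. Since $M$ is hyperbolic and closed, that proposition gives the two-sided bound $-\vol(M) \le \vol(\rho_Z) \le \vol(M)$ for any representation $\rho_Z\co \pi_1(M) \to \PSL$. Combining this with the equality $\vol(\rho_Z) = \vol(Z)$ from Lemma~\ref{lem:rep vol} yields the desired inequality $-\vol(M) \le \vol(Z) \le \vol(M)$.

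Finally, for the rigidity clause, I would note that if $\vol(Z) = \pm \vol(M)$ then Lemma~\ref{lem:rep vol} forces $\vol(\rho_Z) = \pm \vol(M)$, so the equality case of Proposition~\ref{dun} immediately concludes that $\rho_Z$ is discrete and faithful. There is no real obstacle at this stage: the entire technical content of the corollary is packaged into Lemma~\ref{lem:rep vol} (whose proof is postponed to Section~\ref{sec:proofs} and is where the actual work lies, converting the sum-of-Lobachevsky definition of $\vol(Z)$ into the integral $\int_M f^*(\vol_{\mathbb{H}^3})$ for a $\rho_Z$-equivariant map $f$) and into the Thurston–Gromov–Goldman volume rigidity quoted from Dunfield. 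The corollary itself is a one-line consequence.
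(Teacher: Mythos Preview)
Your proposal is correct and matches the paper's approach exactly: the paper introduces the corollary with the sentence ``Putting Lemma~\ref{lem:rep vol} and Proposition~\ref{dun} together, we have,'' and gives no further argument. Your write-up simply spells out this substitution, which is all that is required.
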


If $M$ is not hyperbolic, the range of $\vol$ depends on both the pieces of the JSJ decomposition of $M$ and the way they glue up. The proof of Theorem 1.3 in \cite{Fr} implies the following:

\begin{lemma}[Francaviglia \cite{Fr}]\label{lem:vol of sSFS}
Suppose the closed, orientable 3--manifold $M$ is a graph manifold. Then for any representation $\rho\co \pi_1(M) \to \PSL,$ we have $\vol(\rho)=0.$
\end{lemma}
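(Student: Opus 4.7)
The plan is to adapt Francaviglia's proof of Theorem 1.3 in \cite{Fr}, which reduces the computation of $\vol(\rho)$ to a local analysis on the pieces of the JSJ decomposition. Write $M = N_1 \cup \cdots \cup N_k$, where each $N_i$ is Seifert fibred and the pieces are glued along a family of incompressible tori $T_1, \ldots, T_l$. We will construct a $\rho$--equivariant piecewise smooth map $f\co\widetilde{M} \to \mathbb{H}^3$ such that the pullback $f^{*}\vol_{\mathbb{H}^3}$ vanishes identically on each lift of each $N_i$, and hence by the definition of representation volume $\vol(\rho) = \int_M f^{*}\vol_{\mathbb{H}^3} = 0$.

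For each Seifert fibred piece $N_i$, let $\phi_i \in \pi_1(N_i) \subset \pi_1(M)$ be the class of a regular fibre; it is central in $\pi_1(N_i)$. If $\rho(\phi_i) \neq \identity$, then $\rho(\phi_i)$ is central in $\rho(\pi_1(N_i))$, so the entire image $\rho(\pi_1(N_i))$ is contained in the centraliser of $\rho(\phi_i)$ in $\PSL$; this centraliser is elementary in the sense that it preserves either a geodesic or a single point in $\overline{\mathbb{H}^3}$, and a $\rho|_{\pi_1(N_i)}$--equivariant map from $\widetilde{N_i}$ into this $1$--dimensional or $0$--dimensional invariant set is easily constructed. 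If $\rho(\phi_i) = \identity$, then $\rho|_{\pi_1(N_i)}$ factors through the orbifold fundamental group of the base 2--orbifold of $N_i$, and one can build an equivariant map into a totally geodesic $\mathbb{H}^2 \subset \mathbb{H}^3$ (or collapsing further to a point in the spherical base case). In all cases the image of $f$ restricted to $\widetilde{N_i}$ has dimension at most $2$, so $f^{*}\vol_{\mathbb{H}^3}$ vanishes on $\widetilde{N_i}$.

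The remaining step is to glue these local maps into a single $\rho$--equivariant map on $\widetilde{M}$. On each JSJ torus $T_j$, the image $\rho(\pi_1(T_j))$ is abelian (as $\pi_1(T_j) \cong \mathbb{Z}^2$) and thus preserves a common geodesic or fixed point, so the two adjacent low-dimensional maps can be arranged, up to equivariant homotopy, to agree on $p^{-1}(T_j)$. The main obstacle is precisely this compatibility step: one must choose the invariant geodesics, points, or totally geodesic planes on adjacent pieces so that they match on the separating tori, and carry this out in a $\pi_1(M)$--equivariant way compatible with the Bass--Serre structure of the graph-of-groups decomposition. This is the combinatorial bookkeeping performed in Francaviglia's proof; once it is complete, $f$ is globally defined and its pullback of the volume form vanishes on each piece, yielding $\vol(\rho) = 0$.
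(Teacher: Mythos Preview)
The paper does not supply its own proof of this lemma; it simply records that the statement follows from Francaviglia's proof of Theorem~1.3 in \cite{Fr}. Your proposal is therefore a sketch of Francaviglia's argument itself, and its overall architecture---build a $\rho$--equivariant map that collapses each Seifert piece to something of dimension at most~$2$, then patch across the JSJ tori---is the right one.

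Two points in your sketch are not correct as stated. First, the class $\phi_i$ of a regular fibre is \emph{normal} in $\pi_1(N_i)$, but it need not be central when the base orbifold is non-orientable (which can happen even though $N_i$ is orientable). This does not damage the argument, since the normaliser in $\PSL$ of a nontrivial element is still elementary (it preserves a geodesic or a point of $\partial\mathbb{H}^3$), but you should invoke normality rather than centrality. Second, and more seriously, when $\rho(\phi_i)=\identity$ the induced representation of the base orbifold group need \emph{not} preserve any totally geodesic $\mathbb{H}^2$: a surface group admits plenty of irreducible representations into $\PSL$ that are not Fuchsian. The correct mechanism is simpler. Since $\rho|_{\pi_1(N_i)}$ kills the fibre, an equivariant map $\widetilde{N_i}\to\mathbb{H}^3$ can be chosen to factor through the projection to the universal cover of the $2$--dimensional base orbifold; the resulting map has rank at most~$2$ everywhere, so $f^*\vol_{\mathbb{H}^3}=0$ on that piece for rank reasons, not because the image lies in a geodesic plane. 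With these two corrections your outline agrees with Francaviglia's.
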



\section{Proofs}
\label{sec:proofs}


\subsection{Proof of Lemma \ref{lem:rep vol}}

Let $M$ be a closed, oriented, triangulated $3$--manifold with the property that all edges in $M$ are essential; that is, we have dropped the hypothesis that $M$ be hyperbolic. We need to show that $\vol(\rho_Z) = \vol(Z)$ for each $Z \in \para(M).$

Denote $p\co\widetilde{M}\rightarrow M$ be the universal cover of $M,$ and use the set-up from Subsection~\ref{subsec:Yoshida}.

To begin with, construct any $\rho_Z$--equivariant map $F_0\co \widetilde{M} \to \projectivedisc^3$ with the property that every standard 3--simplex in $\widetilde{M}$ is mapped by a straight map to $\projectivedisc^3.$ Any such map
can be constructed by first choosing a representative for each $\pi_1(M)$--orbit  of vertices in $\widetilde{M},$ as well as an image point in $\projectivedisc^3$ for each orbit representative. One then extends the map over the whole 0--skeleton $\rho_Z$--equivariantly, and over the 3--skeleton by straight maps. This is clearly well-defined given our rigid set-up.

Let $\sigma^0$ be a vertex in $\widetilde{M},$ and suppose $D_Z(\sigma^0) = v.$ Then let $l_{\sigma^0}$ be the geodesic ray from $F_0(\sigma^0)$ to $v.$ This gives a set of geodesic rays, one for each vertex in $\widetilde{M}.$ Since both maps, $F_0$ and $D_Z,$ are $\rho_Z$--equivariant, the set of rays is also $\rho_Z$--equivariant.

For each $t \in (0, \infty),$ define the map $F_t\co \widetilde{M} \to \projectivedisc^3$ as follows. If $\sigma^0$ is a vertex in $\widetilde{M},$ then let $F_t(\sigma^0)$ be the point on the ray $l_{\sigma^0}$ which is distance $t$ from $F_0(\sigma^0).$ Then extend $F_t$ to the 3--simplices by straight maps. Since the action of $\pi_1(M)$ on $\projectivedisc^3$ via $\rho_Z$ is by isometries, this is a well-defined, $\rho_Z$--equivariant map.

Since $D_Z$ maps every 3--simplex to an ideal hyperbolic 3--simplex (possibly flat), we are in the situation of Lemma~\ref{tech}, and it follows that
$$\lim_{t\to \infty} F_t = D_Z.$$
For each $\sigma\in \Sigma^3,$ choose a standard 3--simplex $\widetilde{\sigma} \subset p^{-1}(\sigma).$
We have:
\begin{equation}\label{3}
\begin{split}
\vol(Z)=&\sum_{\sigma\in \Sigma^3}\vol(z_{\sigma})\\
=&\sum_{\sigma\in\Sigma^3}\vol_{\mathbb{H}^3}(D_Z(\widetilde{\sigma}))\\
=&\int_{M\setminus M^{(0)}}(D_Z)^*(d\vol_{\mathbb{H}^3})\\
=&\int_M(\lim_{t\rightarrow\infty}F_t)^*(d\vol_{\mathbb{H}^3})\\
=&\lim_{t\rightarrow\infty}\int_M(F_{t})^*(d\vol_{\mathbb{H}^3})\\
=&\vol(\rho_Z),
\end{split}
\end{equation}
since each term in the limit is constant.\qed


\subsection{Proof of Theorem~\ref{thm:main}}
\label{sec:existence}


Let $M$ be a closed, oriented, triangulated, hyperbolic $3$--manifold with the property that all edges in $M$ are essential. It follows from Corollary~\ref{cor:shape vol} that whenever $\vol(Z) = \pm \vol(M),$ then $\rho_Z$ is discrete and faithful. In this case, Mostow rigidity implies that $M$ is isometric with $\mathbb{H}^3/\rho_Z(\pi_1(M)),$ where the isometry is orientation preserving if $\vol(Z) = \vol(M),$ and orientation reversing if $\vol(Z) = - \vol(M).$ It remains to prove the existence of a maximum volume solution.

Let $p\co\widetilde{M}\rightarrow M$ be the universal cover of $M.$
For each $i\in\{1,..,|V|\},$ choose a geodesic $l_i$ passing through
$v_i$ so that the hypotheses in the proof of
Proposition~\ref{pro:spin} are satisfied, namely, for every
$3$-simplex $\widetilde{\sigma}$ in $\widetilde{M}$ any two lifts of
geodesics in $L=\{l_i\}$ which pass through different vertices of
$\widetilde{\sigma}$ have no endpoints in common. A generic choice
of $L$ satisfies the requirement. Indeed, pick a lift
$\widetilde{v_i}$ for each $v_i$, and select
$v_i^*\in\partial\mathbb{H}^3$. Consider $\widetilde{l_i}$ to be the
geodesic from $\widetilde{v_i}$ to $v_i^*$ and let $L$ the the image
of $\{\widetilde{l_i}\}$. For generic choices of $\{v_i^*\}$,
$\gamma\cdot\widetilde{l_i}$ and $\widetilde{l_j}$ do not have an
end point in common.

Let $\sigma$ be a $3$-simplex in $M,$ and $\widetilde{\sigma}$ a
lift of $\sigma$ in $\widetilde{M}.$ By (4) of
Proposition~\ref{pro:spin},
$\widetilde{\sigma}^*\doteq\widetilde{F}_{\infty}(\widetilde{\sigma})$
is a non-degenerate ideal tetrahedron in
$\overline{\projectivedisc^3},$ and has the associated shape
parameters $Z_{\widetilde{\sigma}^*}=(z_{q_i}),$ $i=1,2,3.$ For any
$3$--simplex $\sigma$ in $M,$ we assign the shape parameters
$z_{q_i}$ of $\widetilde{\sigma}^*$ to the corresponding
$q_i\subset\sigma,$ and get an assignment
$Z_{\infty}=(z_{\sigma})\in(\mathbb{C}\setminus\{0,1\})^Q.$ We claim
that $Z_{\infty}$ is a solution to hyperbolic gluing equation, i.e.,
$Z_{\infty}$ satisfies (a) and (b) of Definition \ref{ThGE}.

Indeed, by (2) of Proposition \ref{wellknown}, (b) is obvious. To
show (a),  let $e$ be an edge in $\tri,$ and $\widetilde{e}$ a lift
of $e$ in $\widetilde{\tri}$ with end points $\widetilde{u}$ and
$\widetilde{w}.$ Let
$\widetilde{\sigma}_1,.$..,$\widetilde{\sigma}_k$ be the tetrahedra
in $\widetilde{\tri}$ having $\widetilde{e}$ as an edge in cyclic
order and $\widetilde{q}_i$ be the normal quadrilateral in
$\widetilde{\sigma}_i$ facing $\widetilde{e}.$ Let $\widetilde{v}_i$
and $\widetilde{v}_{i+1}$ be the other two vertices of
$\widetilde{\sigma}_i$ so that
$\widetilde{v}_i\in\widetilde{\sigma}_{i-1}\cap\widetilde{\sigma}_i.$
By (4) of Proposition~\ref{pro:spin}, we get $k$ ideal tetrahedra
$\widetilde{\sigma}_{i}^*=\widetilde{F}_{\infty}(\widetilde{\sigma}_i)$
sharing the geodesic
$\widetilde{e}^*=\widetilde{F}_{\infty}(\widetilde{e})$ as an edge.

Without loss of generality, we may assume that $\widetilde{e}^*$ is the geodesic from $0$ to $\infty.$ Suppose $z_i$ is the complex number assigned to the normal quadrilateral $q_i$ in $\sigma_i$ facing $e,$ i.e., the shape parameter of $\widetilde{\sigma}_{i}^*$ at the normal quadrilateral $\widetilde{q}_i$ , we have
\begin{align*}
\prod_{q\in Q} z_q^{i(q,e)}=&\prod_{i=1}^k z_{\widetilde{q}_i}\ \ &&\text{by definition of}\ Z_{\infty}\\
=&\prod_{i=1}^k(0,\infty;\widetilde{v}_{i}^*,\widetilde{v}_{i+1}^*)\ \ &&\text{by Definition \ref{shape}}\\
=&\prod_{i=1}^k\frac{\widetilde{v}_{i}^*}{\widetilde{v}_{i+1}^*} && \\
=&1,&&\\
\end{align*}
where $\widetilde{v}_{k+1}^*$ is understood to be $\widetilde{v}_1^*,$ and this verifies (a).

To prove the volume identity, we have the following calculation.
\begin{align*}
\vol(Z_{\infty})=&\sum_{\sigma\in \Sigma^3}\vol(z_{\sigma})&&\\
=&\sum_{\sigma\in\Sigma^3}\vol_{\mathbb{H}^3}(\widetilde{\sigma}_{\infty})\ \ &&\text{by Definition \ref{vol} and (\ref{Milnor})};\\
=&\int_{M\setminus M^{(0)}}(F_{\infty})^*(d\vol_{\mathbb{H}^3})\ \ &&\text{by (4) of Proposition \ref{pro:spin}}\\
=&\int_M(\lim_{t\rightarrow\infty}F_t)^*(d\vol_{\mathbb{H}^3})\ \ &&\text{by (3) of Proposition \ref{pro:spin}}\\
=&\lim_{t\rightarrow\infty}\int_M(F_t)^*(d\vol_{\mathbb{H}^3})\ \
&&\text{by Lemma \ref{tech}}.
\end{align*}
By (5) of Proposition \ref{pro:spin}, $F_t$ is homotopic to the
identity map of $M,$ so we have
\begin{equation*}
\lim_{t\rightarrow\infty}\int_MF_t^*(d\vol_{\mathbb{H}^3})
=\lim_{t\rightarrow\infty}\int_Md\vol_M =\vol(M).
\end{equation*}
This completes the proof.
 \qed


\subsection{Proof of Theorem~\ref{thm:app}}

One first needs to decide whether $M$ is irreducible and atoroidal. The fact that this can be done follows from the work of Haken; see Jaco and Tollefson \cite{JT} for a complete exposition.

So we may suppose that $M$ is a triangulated, closed, irreducible, atoroidal and oriented 3--manifold. It follows from Thurston's Geometrisation Conjecture (which now is a complete theorem due to Perelman \emph{et al.}\thinspace \cite{perelman1, perelman2, perelman3, morgan-tian, morgan-tian2, kleiner-lott, cao-zhu}) that $M$ is either hyperbolic or a small Seifert fibred space. By passing to a barycentric sub-division, we may assume that all edges are essential.

The parameter space has finitely many Zariski components and the volume function is constant on each component. (In fact, it is constant on topological components.) Hence pick one point from each component. If $\vol$ vanishes on each, or if the parameter space is empty, then it follows from Theorem \ref{thm:main} that $M$ is not hyperbolic. Whence it is a small Seifert fibred space. In this case, Sela \cite{Se}, Section 10, describes an effective algorithm to recognise the Seifert fibred structure using the fundamental group, and the classification of Seifert fibred spaces can be used to construct the structure.

Hence suppose that there is a point $Z$ with $\vol(Z)\neq 0.$ It follows from Lemma~\ref{lem:vol of sSFS} that $M$ cannot be a small Seifert fibred space and hence is hyperbolic. Theorem~\ref{thm:main} implies that the values of $\vol$ lie in $[-\vol(M), \vol(M)]$ and that both bounds are attained. Then a discrete and faithful representation of $\pi_1(M)$ into $\PSL$ is the holonomy representation determined by a point of maximal volume. The structure can now be constructed as described by Manning (see \cite{Ma}, Section 4).

It remains to address how to turn the above outline into a rigorous algorithm. The first part, checking that $M$ is irreducible and atoroidal using normal surface theory, is clearly rigorous and the routines are implemented in Regina \cite{Bu}.

The algorithms from algebraic geometry which are needed involve computations with algebraic numbers over the rationals. Manning (see \cite{Ma}, Section 2) has summarised most of what we need using results from \cite{BW} and \cite{Lo}; including prime decomposition and determining the dimension of an ideal. The ability to pick a point from each Zariski component of positive dimension using exact arithmetic follows from the fact that in order to determine dimension, one finds a maximally independent set of coordinates. Each is a non-constant function on the variety, and one can therefore use standard elimination theory (see for instance \cite{CLO}) to determine a point on the Zariski component algorithmically.

Having one point from each Zariski component, say $\{ Z_1, \ldots, Z_k\},$ it suffices to compute $\vol(Z_i)$ up to high enough precision in order to determine the maximum point as follows. First compute $\vol(Z_i)$ up to a pre-determined precision. If each $\vol(Z_i)$ is estimated to be less than $0.9,$ then the maximum must be equal to zero, since recent work of Gabai, Meyerhoff and Milley \cite{GMM, Mil} has shown that the Weeks manifold (which has volume approximately 0.9427) is the orientable hyperbolic 3--manifold of smallest volume. Otherwise, compare the values to determine the maximum. In the event that two or more values are equal at the pre-determined precision, one can compute the characters of the associated representations in exact arithmetic. If they agree, either of the points will be the desired maximum. If they don't, the procedure of incrementally increasing the precision will eventually terminate. This algorithm is to a point  not only theoretical, but also practical: computing volume up to any given precision (but subject to hardware limitations) is implemented in the software snap \cite{snap-paper}. \qed


\section*{Acknowledgements}

Research of the first and the third author is partially supported by
the NSF.

Research of the second author is partially funded by a UQ New Staff Research Start-Up Fund and under the Australian Research Council's Discovery funding scheme (DP1095760).

The third author would like to thank Ren Guo, Jingzhou Sun and Yuan Yuan for helpful discussions.

\bibliographystyle{unsrt}
\bibliography{ref}

\begin{thebibliography}{99}

\bibitem{BW} Thomas Becker and Volker Weispfenning:
\emph{Gr\"obner Bases: A Computational
Approach to Commutative Algebra},
SpringerÐVerlag, New York (1993)

\bibitem{BP} Riccardo Benedetti and Carlo Petronio: {\em Lectures on hyperbolic geometry.}
Springer-Verlag, Berlin, 1992.

\bibitem{Bu} Benjamin A.\thinspace Burton:
\emph{Regina: Normal Surface and 3-Manifold Topology Software},
{\tt http://\allowbreak regina.\allowbreak
        sourceforge.\allowbreak net/}, 1999--2009.

\bibitem{cao-zhu}
Huai-Dong Cao and Xi-Ping Zhu: \emph{A complete proof of the {P}oincar\'e and
   geometrization conjectures---application of the {H}amilton-{P}erelman theory
   of the {R}icci flow}, Asian J. Math. \textbf{10} (2006), no.~2, 165--492,
   Erratum, p. 663.

\bibitem{snap-paper} David Coulson, Oliver A.\thinspace Goodman, Craig D.\thinspace Hodgson and Walter D.\thinspace Neumann:
\emph{Computing arithmetic invariants of 3-manifolds},
Experimental Mathematics 9 (2000), 127-152.

\bibitem{CLO} David Cox, John Little and Donal O'Shea:
\emph{Ideals, varieties, and algorithms. An introduction to computational algebraic geometry and commutative algebra.} Third edition. Springer, New York, 2007.

\bibitem{Du}  Nathan M.\thinspace Dunfield: {\em Cyclic surgery, degrees of maps of character
curves, and volume rigidity for hyperbolic manifolds}, Invent.Math.
136(1999), no.3,623-657

\bibitem{Fr}  Stefano Francaviglia: {\em Hyperbolic volume of representations of
fundamental groups of cusped 3-manifolds}, IMRN, 2004, no.9,425-459

\bibitem{FK} Stefano Francaviglia and Ben Klaff:
{\em Maximal volume representations are Fuchsian},
Geom. Dedicata 117 (2006), 111--124.

\bibitem{GMM} David Gabai, Robert Meyerhoff and Peter Milley:  \emph{Minimum volume cusped hyperbolic three-manifolds}, Journal of the American Mathematical Society 22 (4): 1157Ð1215 (2009).

\bibitem{JT} William Jaco and Jeffrey Tollefson:
\emph{Algorithms for the complete decomposition of a closed $3$-manifold},
Illinois J. Math. 39 (1995), no. 3, 358--406.

\bibitem{Lu1}  Feng Luo: {\em Continuity of the volume of simplices in
classical geometry}, Commun. Contemp. Math. 8 (2006), no. 3,
411--431.

\bibitem{Lu}  Feng Luo: {\em Volume optimization, normal surfaces and Thurston's
equation on triangulated 3-manifolds}, Preprint, arXiv:0903.1138v1.

\bibitem{LT}  Feng Luo and Stephan Tillmann: {\em Angle structures and normal surfaces},
Trans. Amer. Math. Soc. 360 (2008), no. 6, 2849--2866.

\bibitem{kleiner-lott}
Bruce Kleiner and John Lott: \emph{Notes on {P}erelman's papers}, Geom.\thinspace Topol.\thinspace 12, 2587--2855 (2008).

\bibitem{Lo} R\"udiger Loos:
\emph{Computing in Algebraic Extensions}, from: ÒComputer Alge-
bra: Symbolic and Algebraic ComputationÓ, (Bruno Buchberger, George Edwin Collins, R\"udiger Loos, Rudolf Albrecht, editors), SpringerÐVerlag, New York (1983) 173Ð188

\bibitem{Ma} Jason Manning: \emph{Algorithmic detection and description of hyperbolic structures on closed 3--manifolds with solvable word problem},  Geometry \& Topology, Volume 6 (2002) 1--26.

\bibitem{Mil} Peter Milley: \emph{Minimum volume hyperbolic 3-manifolds}, Journal of Topology 2 (2009), no. 2, 181Ð192.

\bibitem{Mi} John Milnor: \emph{Collected papers.} Vol. 1. Geometry. Publish or Perish, Inc., Houston, TX, 1994.

\bibitem{morgan-tian}
John Morgan and Gang Tian: \emph{Ricci flow and the Poincar\'e conjecture}, AMS, Providence, 2007.

\bibitem{morgan-tian2}
John Morgan and Gang Tian: \emph{Completion of the Proof of the Geometrization Conjecture}, arXiv:0809.4040v1.

\bibitem{penner} Robert Penner \emph{The decorated Teichm\"uller space of punctured surfaces}, Comm. Math. Phys. 113 (1987), no.2, 299--339.

\bibitem{perelman1}
Grisha Perelman: \emph{The entropy formula for the {R}icci flow and its
   geometric applications}, {arXiv:\allowbreak math/\allowbreak 0211159}.

\bibitem{perelman3}
Grisha Perelman: \emph{Finite extinction time for the solutions to the {R}icci flow on
   certain three-manifolds}, {arXiv:\allowbreak math/\allowbreak 0307245}.

\bibitem{perelman2}
Grisha Perelman: \emph{Ricci flow with surgery on three-manifolds},
   {arXiv:\allowbreak math/\allowbreak 0303109}.

\bibitem{Ra}  John G.\thinspace Ratcliffe: {\em Foundations of hyperbolic manifolds}. Second edition, Graduate Texts in Mathematics, 149. Springer, New York, 2006.

\bibitem{Ru} J.\thinspace Hyam Rubinstein: \emph{An Algorithm to Recognise Small Seifert Fiber Spaces}, Turkish J.\thinspace Math., 28 (2004) , 75 - 87.

\bibitem{Se} Zlil Sela: \emph{The isomorphism problem for hyperbolic groups I,} Annals of Mathematics 141 (1995) 217Ð283.

\bibitem{Th1}  William P.\thinspace Thurston: {\em Hyperbolic structures on 3-manifolds I:
Deformation of acylindrical manifolds}, Annals of Mathematics (2)124(1986),
no.2,203--246.

\bibitem{Th2}  William P.\thinspace Thurston: {\em The geometry and topology of 3--manifolds}, Princeton Univ. Math. Dept. (1978). Available from {\tt http://\allowbreak msri.org/\allowbreak publications/\allowbreak books/\allowbreak gt3m/.}

\bibitem{Yo} Tomoyoshi  Yoshida: \emph{On ideal points of deformation curves of hyperbolic 3--manifolds with one cusp}, Topology, 30, 155-170 (1991).

\end{thebibliography}




\address{Feng Luo,\\ Department of Mathematics,\\ Rutgers University,\\ New Brunswick, NJ 08854, USA\\
(fluo@math.rutgers.edu)\\--}

\address{Stephan Tillmann,\\ School of Mathematics and Physics,\\ The University of Queensland,\\ Brisbane, QLD 4072, Australia\\
(tillmann@maths.uq.edu.au)\\--}

\address{Tian Yang,\\ Department of Mathematics,\\ Rutgers University,\\ New Brunswick, NJ 08854, USA\\
(tianyang@math.rutgers.edu)}

\Addresses

\end{document}